\numberwithin{equation}{section}
\theoremstyle{plain}
\newtheorem{thm}{Theorem}[section]
\newtheorem{theorem}[thm]{Theorem}
\newtheorem{corollary}[thm]{Corollary}
\newtheorem{lemma}[thm]{Lemma}
\newtheorem{definition}[thm]{Definition}
\newtheorem{remark}[thm]{Remark}
\newtheorem{remarks}[thm]{Remarks}
\newtheorem{proposition}[thm]{Proposition}
\newtheorem{example}[thm]{Example}
\newtheorem{non-example}[thm]{Non-example}
\newtheorem{background}[thm]{Background}
\newtheorem{conjecture}[thm]{Conjecture}
\newtheorem*{claim*}{Claim} 
\newtheorem*{lemma*}{Lemma}
\newtheorem*{theorem*}{Theorem}
\newtheorem*{conjecture*}{Conjecture}
\newtheorem{application}[thm]{Application}
\newcommand{\bC}{{\mathbb C}}
\newcommand{\bF}{{\mathbb F}}
\newcommand{\bN}{{\mathbb N}}
\newcommand{\bQ}{{\mathbb Q}}
\newcommand{\bZ}{{\mathbb Z}}
\newcommand{\half}{{\textstyle\frac{1}{2}}}
\newcommand{\quarter}{{\textstyle\frac{1}{4}}}
\newcommand{\iso}{\cong}
\newcommand{\lla}{\langle\hspace{-0.3ex}\langle}
\newcommand{\rra}{\rangle\hspace{-0.3ex}\rangle}
\begin{document}

\title[$P$-adic splittings]{$P$-adic splittings of the quantum connection}
\author{Paul Seidel}

\begin{abstract}
We introduce operations with $p$-adic integer coefficients, associated to idempotents in the quantum cohomology of a monotone symplectic manifold, and apply them to the structure of the quantum connection.
\end{abstract}

\maketitle

\section{Overview\label{sec:overview}}

\subsection{The splitting problem}
Let $M$ be a closed monotone symplectic manifold (for algebraic geometers, complex projective Fano variety). 
Throughout, we will use cohomology with coefficients in an integral domain $R$. Quantum cohomology is $H^*(M)[q]$, where $q$ is a formal variable of degree $2$, equipped with the small quantum product $\ast_q$. Suppose that after inverting $q$, the unit in quantum cohomology can be written as a sum of mutually orthogonal idempotents. This means we have $e_1,\dots,e_m \in H^*(M)[q^{\pm 1}]$ (of degree zero) satisfying 
\begin{equation} \label{eq:idempotents}
1 = e_1 + \cdots + e_m, \quad e_i \ast_q e_j = \begin{cases} e_i & i = j, \\ 0 & i \neq j. \end{cases}
\end{equation}
Quantum product with the $e_i$ yields a splitting of $H^*(M)[q^{\pm 1}]$ into graded $R[q^{\pm 1}]$-modules. For reasons having to do with Fukaya categories \cite{hugtenburg24, chen24}, one expects that there should be induced splittings of other algebraic structures; specifically, we're thinking of the (completed) quantum connection. Namely, introduce another formal variable $t$ of degree $2$, and define the $S^1$-equivariant version of quantum cohomology to be $H^*(M)[q,t]$. The quantum connection is the degree $2$ endomorphism
\begin{equation} \label{eq:quantum-connection}
\nabla_{tq\partial_q} x = tq\partial_q x + c_1(M) \ast_q x.
\end{equation}
We formulate the expectation mentioned above explicitly as:

\begin{conjecture} \label{th:splitting-conjecture}
Given idempotents \eqref{eq:idempotents}, the space $H^*(M)[q^{\pm 1}][[t]]$ carries a canonical splitting into graded $R[q^{\pm 1}][[t]]$-modules, which is invariant under $\nabla_{tq\partial_q}$, and whose $t = 0$ reduction agrees with the splitting of $H^*(M)[q^{\pm 1}]$ given by quantum product with the idempotents. 
\end{conjecture}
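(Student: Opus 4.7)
The plan is to lift the given idempotents through the $t$-adic filtration by an inductive formal construction. A $\nabla$-invariant direct sum decomposition of $V := H^*(M)[q^{\pm 1}][[t]]$ into graded $R[q^{\pm 1}][[t]]$-submodules is the same as a complete orthogonal system of degree-zero projections $P_1,\dots,P_m \in \mathrm{End}_{R[q^{\pm 1}][[t]]}(V)$ commuting with $\nabla_{tq\partial_q}$ and reducing modulo $t$ to the operators $E_i(x) := e_i \ast_q x$; the splitting is then recovered by setting $V_i := P_iV$.

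I would write $P_i = \sum_{k \geq 0} t^k P_i^{(k)}$ with $P_i^{(0)} = E_i$. A direct computation yields
\[
[\nabla_{tq\partial_q}, P_i] \;=\; tq\,\partial_q P_i + [c_1(M)\ast_q,\,P_i],
\]
so the commutation condition at order $k \geq 1$ reduces to the linear equation
\[
[c_1(M)\ast_q,\,P_i^{(k)}] \;=\; -q\,\partial_q P_i^{(k-1)},
\]
accompanied by the quadratic constraints $\sum_{a+b=k} P_i^{(a)}P_j^{(b)} = \delta_{ij}P_i^{(k)}$ expressing orthogonality and idempotency. The algebra decomposition $H^*(M)[q^{\pm 1}] = \prod_{i=1}^m A_i$ with $A_i = e_i \cdot H^*(M)[q^{\pm 1}]$ puts $c_1(M)\ast_q$ in block-diagonal form, so one would solve the linear equation for the off-diagonal blocks of $P_i^{(k)}$ and use the quadratic constraints to pin down the diagonal blocks.

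The principal obstacle is to show that the obstructions vanish at every order: that $q\,\partial_q P_i^{(k-1)}$ always lies in the image of $\mathrm{ad}(c_1(M)\ast_q)$, and that the linear and quadratic constraints are simultaneously solvable. This is far from automatic, because $q\partial_q$ is not a derivation of $\ast_q$ (the structure constants depend nontrivially on $q$) and because $\mathrm{ad}(c_1(M)\ast_q)$ can fail to be surjective on off-diagonal blocks when restrictions of $c_1\ast_q$ to different $A_i$ share eigenvalues. The natural approach is to differentiate the defining relations $e_i \ast_q e_j = \delta_{ij}e_i$, iterate through the lower-order commutation equations, and exploit commutativity/associativity of $\ast_q$ together with the flatness of the quantum connection to force the source term into the required form. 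The canonicity demanded by the conjecture would then require a separate uniqueness argument, selecting a preferred complement to $\ker \mathrm{ad}(c_1(M)\ast_q)$ at each step so as to fix the lift unambiguously; this is presumably where the $p$-adic coefficients advertised in the title enter, since the obvious choices (orthogonal complements for a bilinear form, or contour integrals producing denominators) may not be available over an arbitrary integral domain $R$.
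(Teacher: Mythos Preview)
The statement you are attempting to prove is labeled as a \emph{Conjecture} in the paper, and the paper does not prove it. What the paper establishes is the weaker Theorem~\ref{th:p-adic}: under the assumption that $R$ is the ring of integers in a $p$-adic number field, there is a canonical splitting of the smaller space $H^*(M)[q^{\pm 1}]\lla t\rra$ (power series whose coefficients have logarithmic $p$-adic decay), not of all of $H^*(M)[q^{\pm 1}][[t]]$.

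Your order-by-order scheme is precisely the ``elementary'' route that the paper explicitly warns against. As noted there, both existence and uniqueness of covariantly constant extensions with prescribed $\tau=0$ part can fail for connections of this general shape (Examples~\ref{th:non-existence} and~\ref{th:non-uniqueness} in the appendix). Your recursion $[c_1(M)\ast_q,\,P_i^{(k)}] = -q\,\partial_q P_i^{(k-1)}$ need not be solvable when $\mathrm{ad}(c_1(M)\ast_q)$ is not surjective on the off-diagonal blocks, and when it is solvable the solution need not be unique. You correctly flag this as the principal obstacle, but your proposed remedy---exploiting commutativity, associativity, and flatness---is only a hope, not an argument, and the paper gives no indication that such manipulations suffice. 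The known cases (Backgrounds~\ref{th:eigenvalue-splitting} and~\ref{th:semisimple}) succeed only under extra hypotheses (eigenvalue separation, or semisimplicity combined with Dubrovin's normal form) that force the recursion to have a unique solution; outside those regimes the formal argument stalls, which is exactly why the statement remains a conjecture.

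The paper's method for Theorem~\ref{th:p-adic} is not a refinement of this algebra: it is geometric. For each $m$ one builds a $\bZ/p^m$-equivariant operation $Q\Sigma_{m,e_i}$ by counting pseudo-holomorphic spheres with $p^m+2$ marked points carrying the cyclic symmetry, proves covariant constancy directly via an equivariant localisation argument on moduli spaces (Proposition~\ref{th:covariantly-constant}), checks compatibility as $m$ increases (Proposition~\ref{th:reducing-diagram}), and passes to the inverse limit over the tower $\bZ/p \subset \bZ/p^2 \subset \cdots$. The $p$-adic coefficients do not enter as a device for selecting a complement to $\ker\mathrm{ad}(c_1\ast_q)$, as you guess; they enter because $H^{>0}(B(\bZ/p^m);R)$ is $p^m$-torsion, so the inverse limit over $m$ yields honest $R$-valued operations only when $R$ is $p$-adically complete.
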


It is worth while explaining the notation a little. $H^*(M)[q^{\pm 1}][[t]]$ is the (graded) $t$-completion of $H^*(M)[q^{\pm 1},t]$, so degree $d$ elements are power series
\begin{equation} \label{eq:t-series}
x = \sum_{k=0}^\infty x_k t^k, \quad x_k \in H^*(M)[q^{\pm 1}], \; |x_k| = d-2k.
\end{equation}
As one sees, increasing powers of $t$ are necessarily accompanied by increasingly negative powers of $q$, because of the grading. The use of $t$-completion is natural in terms of the Fukaya-categorical motivation, where it is part of the definition of negative cyclic homology. For an alternative perspective, let's introduce the degree zero variable $\tau = t/q$, writing our space as $H^*(M)[q^{\pm 1}][[\tau]]$. We now focus on a single degree $d$, and write elements as
\begin{equation}
x = \sum_{k=0}^\infty \;\; \sum_{j = \lceil d/2 \rceil - \mathrm{dim}_{\bC}(M)}^{\lfloor d/2 \rfloor} x_{jk} q^j \tau^k, \;\; x_{jk} \in H^{d-2j}(M).
\end{equation}
One has
\begin{equation}
t\partial_q (x_{jk} q^j \tau^k) = x_{jk} (j-k)q^j \tau^{k+1} = \Big(-\tau^2\partial_\tau - \tau \frac{|x|-d}{2}\Big) x_{jk} q^j \tau^k.
\end{equation}
Therefore, the quantum connection $\nabla_{t\partial_q} = q^{-1}\nabla_{tq\partial_q}$ (dividing by $q$ so that it acts on the degree $d$ part) can be equivalently written as
\begin{equation} \label{eq:tau-connection}
\nabla_{-\tau^2\partial_\tau} = -\tau^2 \partial_\tau + (q^{-1} c_1(M) \ast_q \cdot) - \tau \frac{\mathrm{Gr}-d}{2},
\end{equation}
where $\mathrm{Gr}$ multiplies each class in $H^*(M)$ by its degree, and is extended $(t,q)$-linearly. 
When thinking like this in a single degree $d$, one can set $q = 1$ and think of our space as $H^{\mathit{even}}(M)[[\tau]]$ ($d$ even) respectively $H^{\mathit{odd}}(M)[[\tau]]$ ($d$ odd). 
The connection \eqref{eq:tau-connection} has a quadratic pole at $\tau = 0$, corresponding to $q/t = \infty$. The structure of that singularity is involved in some of the major open conjectures in the field \cite{dubrovin98, katzarkov-kontsevich-pantev08, galkin-golyshev-iritani16}. 

\subsection{State of the art}
There are some situations where Conjecture \ref{th:splitting-conjecture} is known to hold, for essentially elementary reasons.

\begin{background} \label{th:eigenvalue-splitting}
Let's require that $H^*(M)$ is a free graded $R$-module, and that the endomorphism $q^{-1} c_1(M) \ast_q: H^*(M)[q^{\pm 1}] \rightarrow H^*(M)[q^{\pm 1}]$ satisfies the following (which can always be achieved by enlarging $R$ appropriately):
\begin{equation} \label{eq:differences-are-invertible}
\parbox{35em}{all eigenvalues lie in $R$; and the difference between any two eigenvalues is invertible.}
\end{equation}
In this situation, for each eigenvalue $\lambda$ there is an idempotent $e_{\lambda} \in H^*(M)[q^{\pm 1}]$ (a polynomial in $q^{-1}c_1(M)$ with respect to the quantum ring structure), which projects to the corresponding generalized eigenspace. The associated splitting of the quantum connection exists and is unique; moreover, any covariantly constant endomorphism must preserve the pieces (Lemma \ref{th:splitting-lemma}). This splitting also has a particularly straightforward categorical interpretation \cite{hugtenburg24}, since the ``Fukaya category of $M$'' is best thought of as a collection of categories indexed by $\lambda$.
\end{background}

\begin{background} \label{th:semisimple} (Communicated to the author by Hugtenburg)
Suppose the quantum cohomology ring is semisimple: there are \eqref{eq:idempotents} which form an $R$-basis, so that
\begin{equation} \label{eq:semisimple}
H^*(M)[q^{\pm 1}] \iso \bigoplus_i R[q^{\pm 1}]e_i \quad \text{as a graded ring.}
\end{equation}
Then, there is a unique splitting of the quantum connection that extends this decomposition. To see that, start with the coarser splitting by eigenvalues from Background \ref{th:eigenvalue-splitting}. Assumption \eqref{eq:semisimple} implies that the $\lambda$-summand of the quantum connection is isomorphic to one of the form
\begin{equation} \label{eq:standard-lambda}
- \tau^2\partial_\tau + \lambda - \tau\frac{\mathrm{dim}_{\bC}(M)-d}{2} + O(\tau^2)
\end{equation}
This is due to Dubrovin \cite[Lecture 3]{dubrovin99} (see \cite[Section 2.4]{galkin-golyshev-iritani16}, \cite[Section 6.1]{hugtenburg24}, or \cite[Lemma 2.1.16]{pomerleano-seidel23} for expositions), and uses the five-point WDVV equation. At this point, assume that $R$ is a field of characteristic $0$. Then \eqref{eq:standard-lambda} has the property that every covariantly constant endomorphism is determined by its $\tau = 0$ part, which can be arbitrary (Lemma \ref{th:non-resonant}). One applies that to the projection matrices given by those $e_i$ that lie in each $\lambda$-eigenspace, and obtains the desired unique splitting.
\end{background}
%
%
%
%
%
%

In spite of these encouraging partial results, Conjecture \ref{th:splitting-conjecture} in general appears to resist an elementary approach: both the existence and uniqueness of splittings with given $\tau = 0$ part are problematic, even for connections with a simple pole (see Examples \ref{th:non-existence} and \ref{th:non-uniqueness}; in Background \ref{th:semisimple}, Dubrovin's result put us in a special situation where those problems does not arise). The Fukaya-categorical approach also runs into a fundamental problem: it relies on knowing that the open-closed map is an isomorphism, which seems hard to establish in general (at least with the current definition of Fukaya category, since it requires one to construct ``enough'' Lagrangian submanifolds; see \cite{ganatra16}). Instead, we draw inspiration from another partial result, which points in a different direction.

\begin{background} \label{th:characteristic-p}
Suppose that $R$ has characteristic $p>0$, meaning that $p = 0 \in R$ for some prime $p$. Then, we have quantum Steenrod endomorphisms \cite{fukaya93b, wilkins18, seidel23, seidel-wilkins21} associated to any class $b \in H^*(M)[q^{\pm 1}]$, which are compatible with the quantum connection. In the special case $b = e_i$, these operations satisfy the same relations as the $e_i$. The resulting splitting works on $H^*(M)[q^{\pm 1},t]$, without formal completion (something which one cannot hope to get in characteristic $0$). 
\end{background}

\subsection{$p$-adic coefficients}
From now on, the standing assumption is:
\begin{equation} \label{eq:p-adic-number-ring}
\parbox{35em}{$R$ is the ring of integers in a $p$-adic number field $K$. Here, $p$ is such that the integral cohomology of $M$ has no $p$-torsion (and hence, the $R$-cohomology is free).}
\end{equation}
Readers unfamiliar with this (as is the author, frankly) may think of the standard $p$-adic numbers, $R = \bZ_p \subset K = \bQ_p$. The general case shares the basic properties of $\bZ_p \subset \bQ_p$: namely, $R$ is a principal ideal domain; it is complete with respect to the decreasing filtration by powers of $p$; and $K$ is obtained from $R$ by inverting $p$.

\begin{remark} \label{th:coefficient-transfer}
Take quantum cohomology with complex coefficients. Any splitting of this is given by idempotents $e_i$ with coefficients in $\bar{\bQ}$, which therefore lie in some number field. Let $K$ be the $p$-adic completion of that number field. If $p$ is large, the coefficients of the $e_i$ will lie in the ring of integers $R \subset K$. In that sense, our framework captures all the splittings that are possible over $\bC$. (This is the reason why we don't just stick to $\bZ_p$.)
\end{remark}

When looking at power series with $R$-coefficients, one can take the $p$-adic filtration into account. Specifically for our case:

\begin{definition} \label{th:log-decay}
Let $R\lla\tau\rra \subset R[[\tau]]$ be the ring of those series 
\begin{equation} \label{eq:tau-series}
x = \sum_{k=0}^\infty x_k \tau^k, \;\; x_k \in R,
\end{equation}
with the following property. There are constants $\alpha,\beta$ such that, for all $m$, the reduction of \eqref{eq:tau-series} modulo $p^m$ is a polynomial in $\tau$ of degree $\leq \alpha p^m  + \beta$. In other words, for all $k> \alpha p^m  + \beta$, the coefficient $x_k$ is divisible by $p^m$.
\end{definition}

This is a slightly sharper condition than $p$-adic convergence on the closed unit disc (the latter amounts to saying that the reduction mod every $p^m$ is a polynomial, without degree bounds; or equivalently, that the coefficients of the series are divisibile by higher and higher powers of $p$).

\begin{remark}
Let's normalize the $p$-adic valuation on $R$ so that it satisfies $\mathrm{val}(p^i) = i$. Then, $x \in R\lla \tau\rra$ is equivalent to saying that there is a constant $\gamma$ such that
\begin{equation}
\mathrm{val}_p(x_k) \geq \log_p(k) - \gamma.
\end{equation} 
Graphically, this means that the Newton polygon of $p$ lies above some vertically shifted version of $y = \log_p(x)$ (see e.g.\ \cite{conrad-series} for an elementary introduction to $p$-adic power series). One can call that logarithmic decay of the coefficients (with slope $1$); this notion has come up previously in the theory of $p$-adic differential equations \cite{dwork-sperber91}.
%
\end{remark}

We define $H^*(M)[q^{\pm 1}]\lla t\rra$ by the same condition. The quantum connection is well-defined on this space, because the operation $tq\partial_q$ preserves $p$-divisibility. Our result is:

\begin{theorem} \label{th:p-adic}
Take $R$ as in \eqref{eq:p-adic-number-ring}. Let $(e_i)$ be a collection of idempotents \eqref{eq:idempotents}. Then there is a canonical splitting of $H^*(M)[q^{\pm 1}]\lla t \rra$, which is invariant under $\nabla_{tq\partial_q}$, and whose $t = 0$ reduction agrees with the splitting given by quantum product with the idempotents.
\end{theorem}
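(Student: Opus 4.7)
My strategy is to lift the characteristic-$p$ splitting provided by quantum Steenrod endomorphisms (Background \ref{th:characteristic-p}) to an integral $p$-adic splitting. The space $H^*(M)[q^{\pm 1}]\lla t\rra$ is designed precisely so that such a lift can live there: its logarithmic decay condition corresponds to polynomial-degree-$O(p^m)$ growth in $t$ for the reductions modulo $p^m$, and this is the matching one needs between the arithmetic of the $p$-adic coefficients and the geometry of the Steenrod-type operations.

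I would proceed by inductive construction over $m$. For each $m \geq 1$, the goal is to build $R$-linear endomorphisms $\Pi_i^{(m)}$ of $H^*(M)[q^{\pm 1}][t]$ satisfying, modulo $p^m$: (i) polynomial in $t$ of degree $\leq \alpha p^m + \beta$ for constants $\alpha, \beta$ independent of $m$; (ii) $\Pi_i^{(m)}|_{t=0}$ equals quantum product with $e_i$; (iii) commutation with $\nabla_{tq\partial_q}$; and (iv) the idempotent relations $\Pi_i^{(m)} \Pi_j^{(m)} \equiv \delta_{ij} \Pi_i^{(m)}$ together with $\sum_i \Pi_i^{(m)} \equiv \mathrm{id}$. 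The base case $m=1$ is exactly Background \ref{th:characteristic-p}. The inductive step should invoke refined, $\bZ/p^m$-equivariant (rather than merely $\bZ/p$-equivariant) quantum Steenrod operations, whose polynomial degree in $t$ grows like $p^m$ and whose mod-$p^{m-1}$ reductions recover the previous stage.

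Choosing the lifts compatibly, so that $\Pi_i^{(m+1)} \equiv \Pi_i^{(m)} \pmod{p^m}$, the $p$-adic limits $\Pi_i := \lim_m \Pi_i^{(m)}$ are well-defined $R$-linear endomorphisms of $H^*(M)[q^{\pm 1}][[t]]$. The uniform degree bound $\alpha p^m + \beta$ places their images inside $H^*(M)[q^{\pm 1}]\lla t\rra$ by Definition \ref{th:log-decay}, and the idempotent, orthogonality, unit-sum, and connection-commutation relations pass to the limit and become exact (not merely approximate modulo powers of $p$). Canonicity, in the sense that the splitting is uniquely determined by its $t = 0$ reduction, follows from a Hensel-type rigidity argument: the equation $[\nabla_{tq\partial_q}, \Pi] = 0$ together with prescribed $t = 0$ data can be solved order by order in $t$, and the logarithmic decay condition rules out the runaway solutions that would otherwise obstruct uniqueness, so the limiting $\Pi_i$ are independent of the inductive choices.

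The principal obstacle is the construction step at each $m$: producing the $\Pi_i^{(m)}$ with the correct polynomial-in-$t$ degree bounded by $\alpha p^m + \beta$. Naive Hensel lifting preserves $p$-adic integrality but provides no control whatsoever on $t$-degrees, so genuine Floer-theoretic input is required. The most natural candidate is a family of higher quantum Steenrod operations built from the cohomology of classifying spaces like $B\bZ/p^m$, or equivalently from a compatible pro-$p$-equivariant structure; matching the geometric growth of the $t$-dependence of these operations to the arithmetic growth rate $p^m$ dictated by Definition \ref{th:log-decay} is, I expect, where the substantive technical content of the argument resides.
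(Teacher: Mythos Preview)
Your overall strategy is essentially the paper's: for each $m$, construct $\Gamma_m = \bZ/p^m$-equivariant quantum Steenrod operations $Q\Sigma_{m,e_i}$ landing in $H^*_{\Gamma_m}(\mathit{point}) \otimes H^*(M)[q^{\pm 1}]$, prove they are covariantly constant and satisfy the idempotent relations (via a composition formula $Q\Sigma_{m,b_1}\circ Q\Sigma_{m,b_2} = Q\Sigma_{m,b_1\ast_q b_2}$ together with $Q\Sigma_{m,1}=\mathit{id}$), show that the reduction $\Gamma_{m+1}\to\Gamma_m$ takes $Q\Sigma_{m+1,e_i}$ to $Q\Sigma_{m,e_i}$ (using $e_i^{\ast_q p}=e_i$), and pass to the inverse limit. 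The degree bound $\alpha p^m+\beta$ comes out for the reason you indicate.

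Two points need correction. First, compatibility between levels is not a matter of ``choosing the lifts compatibly'': the operations $Q\Sigma_{m,e_i}$ are each defined geometrically, with no free parameters, and the relation $Q\Sigma_{m+1,b}|_{\Gamma_m} = Q\Sigma_{m,b^{\ast_q p}}$ is a theorem requiring its own geometric argument (a degeneration in which the $p^{m+1}$ marked points come together in $p^m$ groups of $p$, bubbling off spheres that compute the $p$-th quantum power). This is where the idempotency of $e_i$ actually enters.

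Second, and more seriously, your canonicity argument is wrong. You claim that the splitting is \emph{uniquely determined} by its $t=0$ reduction via a Hensel-type rigidity, with the logarithmic decay condition ``ruling out runaway solutions''. This is false in general: Examples~\ref{th:non-existence} and~\ref{th:non-uniqueness} show that for connections with a regular singular point, covariantly constant idempotent extensions of a given $t=0$ projection need not exist and need not be unique, and there is no reason the log-decay condition repairs this. The paper makes no uniqueness claim. Canonicity in Theorem~\ref{th:p-adic} means only that the construction itself is natural: each $Q\Sigma_{m,e_i}$ is well-defined up to chain homotopy independently of all auxiliary choices (Lemma~\ref{th:gromov}(i)), so the limit $Q\Sigma_{\infty,e_i}$ is canonical in that sense, not because it is the unique solution to some lifting problem.
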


\begin{remark}
Concerning the projection to the $i$-th piece in this splitting, one can be explicit about the constants in Definition \ref{th:log-decay}: $\alpha$ is the $q$-pole order of $e_i$, and $\beta$ is the complex dimension of $M$.
\end{remark}

\begin{application} \label{th:divisibility-for-eigenvalues}
In the situation from Background \ref{th:eigenvalue-splitting}, the splitting is unique, hence Theorem \ref{th:p-adic} provides obtain additional information about it. Let's look at the simplest case, where all eigenvalues are integers. Take $N \in \bN$ such that: the differences of eigenvalues are invertible in $\bZ[1/N]$, and $H^*(M;\bZ[1/N])$ is torsion-free. One can apply Theorem \ref{th:p-adic} for $R = \bZ_p$ for any $p$ coprime to $N$. The outcome is that the projection matrices giving the splitting,
\begin{equation}
E_\lambda = \sum_{k=0}^\infty E_{\lambda}^k\tau^k, \quad E_{\lambda}^k \in \mathit{End}(H^*(M;\bZ[1/N])),
\end{equation}
have the following divisibility property: if $k > \alpha p^m + \beta$ for some $p$ and $m$, then $E_{\lambda}^k$ is divisible by $p^m$ (note that $\alpha$, $\beta$ are independent of $p$).
\end{application}

\begin{application}
The previous observation also applies to the semisimple case from Background \ref{th:semisimple}. Again, suppose for simplicity that the $e_i$ are defined over some ring $\bZ[1/N]$. The elementary argument which constructed the splitting does not control denominators, so a priori that might only be defined over $\bQ$. Theorem \ref{th:p-adic} shows that it is defined over $\bZ[1/N]$, and with the same divisibility property.
\end{application}

\begin{example} \label{th:s2}
Look at $M = \bC P^1$, working first with $\bQ$-coefficients. Take the degree zero part of $H^*(M;\bQ)[q^{\pm 1}][[t]]$, namely $\bQ[[\tau]] \cdot 1 \oplus \bQ[[\tau]] \cdot (q^{-1}h)$, where $h = [\mathit{point}]$. The idempotents associated to the eigenvalues, namely $(1 \pm h)/2$, give rise to splitting of the connection \eqref{eq:tau-connection}. The matrices giving those splittings are $(\mathit{id} \pm H)/2$, where the entries of $H$ are 
\begin{equation}
\begin{aligned}
& H_{21} = 1 + \sum_{j>0} \tau^{2j} \left(\begin{smallmatrix} 2j-1 \\ j \end{smallmatrix}\right)^2 \frac{(2j)!}{2^{8j-2}}, 
\\
& H_{11} = \quarter \tau\partial_\tau(\tau H_{21}), \;\;
H_{22} = -H_{11}, \;\;
H_{12} = H_{21} - \half\tau^2 \partial_\tau(H_{11}), \;\;
\end{aligned}
\end{equation}
The only denominators are powers of $2$, so if we think of these formulae as $p$-adic for $p>2$, then the coefficients are $p$-adic integers ($\half = \frac{1-p}{2}(1+p+p^2+\cdots) \in \bZ_p$). If we reduce mod $p$, the expressions coincide with those in \cite[Example 1.6]{seidel-wilkins21}, which are polynomials in $\tau$ of degree $<p$. More interestingly, the series for $H_{21}$, and hence all the other ones, have $p$-adic radius of convergence $p^{2/(p-1)} >1$, which is stronger than lying in $\bZ_p\lla \tau \rra$. 
\end{example}

\begin{example} \label{th:blowup}
Take $M$ to be the four-torus blown up at a point (this is not monotone, but it's spherically monotone, which is sufficient). We look only at the part of quantum cohomology spanned by: $1$; the class $e$ of the exceptional curve; and that of a point, $-e^2$. The quantum connection (in degree $d = 2$) is
\begin{equation} 
-\tau^2 \partial_{\tau} + \begin{pmatrix} \tau & 0 & 0 \\ -1 & -1 & 0 \\ 0 & 1 & -\tau \end{pmatrix}.
\end{equation} 
The splitting for the eigenvalue $1$ of quantum multiplication (equivalently, the covariantly constant extension of quantum multiplication with $e$) is given by the idempotent matrix $E$ with nonzero entries
\begin{equation}
E_{22} = 1, \;\;
E_{12} = \sum_{j\geq 0} (-1)^j j! \tau^j, \;\;
E_{23} = -\sum_{j \geq 0} j! \tau^j, \;\;
E_{13} = E_{12}E_{23}.
\end{equation}
The $p$-adic radius of convergence is $p^{1/(p-1)} > 1$.
\end{example}

We do not know whether the overconvergence phenomenon observed in these examples applies more generally to quantum connections. 
%
Finally, we can give a half-answer to Conjecture \ref{th:splitting-conjecture} in the classical context:

\begin{application}
Take the maximal decomposition of quantum cohomology over $\bar{\bQ}$ (the unique one with the largest number of idempotents $e_i$). Following Remark \ref{th:coefficient-transfer}, those idempotents give rise to ones defined over the ring of integers in some $p$-adic number field $K$. Take the splitting provided by Theorem \ref{th:p-adic}. Via the embedding of $\bar{\bQ}$ into the algebraic closure of $K$, the existence of a splitting over $K[[\tau]]$ implies that of one over $\bar{\bQ}[[\tau]]$ (Proposition \ref{th:constructible-2}). Because of the maximality assumption, the $\tau = 0$ reduction of any such $\bar\bQ[[\tau]]$-splitting must reproduce that given by $(e_i)$. This is a pure existence results, which fails to address the ``canonical'' part of Conjecture \ref{th:splitting-conjecture}.
%
\end{application}

\subsection{Idea of the construction}
Theorem \ref{th:p-adic} is an extension of Background \ref{th:characteristic-p}. Let's recall the definition of the quantum Steenrod endomorphisms, using coefficients in $\bF_p$ for the sake of familiarity. Fix a class $b \in H^*(M;\bF_p)[q^{\pm 1}]$ of even degree (the last-mentioned assumption is in principle unnecessary, but all our applications will satisfy it). The associated operation is a map of degree $p|b|$,
\begin{equation} \label{eq:quantum-steenrod-1}
Q\Sigma_{1,b}: H^*(M;\bF_p)[q^{\pm 1}] \longrightarrow H^*(M;\bF_p)[q^{\pm 1},t,\theta_1]
\end{equation}
where $\theta_1$ has degree $1$ (it is an odd variable, so $\theta_1^2 = 0$ if $p>2$; and $\theta_1^2 = t$ for $p = 2$). 
The definition is based on pseudo-holomorphic spheres in $M$, carrying $(p+2)$ marked points, which satisfy the intersection constraints from Figure \ref{fig:bbb}.
\begin{figure}
\begin{centering}
\includegraphics{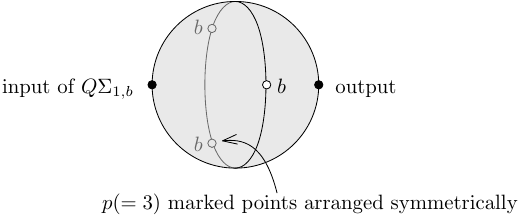}
\caption{\label{fig:bbb}Picture of \eqref{eq:quantum-steenrod-1}.}
\end{centering}
\end{figure}%
That picture has an obvious symmetry group $\Gamma_1 = \bZ/p$, and one works equivariantly (in the sense of the Borel construction) with respect to that symmetry. The operations are correspondingly indexed by $H^*(B\Gamma_1;\bF_p) = \bF_p[t,\theta_1]$.

Let's increase the number of extra marked points to $p^2$, and the symmetry to $\Gamma_2 = \bZ/p^2$. The coefficient ring is similar, $H^*(B\Gamma_2;\bF_p) = \bF_p[t,\theta_2]$, and the operations are correspondingly maps of degree $p^2|b|$,
\begin{equation}
Q\Sigma_{2,b}: H^*(M;\bF_p)[q^{\pm 1}] \longrightarrow H^*(M;\bF_p)[q^{\pm 1},t,\theta_2].
\end{equation}
Restriction to the subgroup $\Gamma_1 \subset \Gamma_2$ gives rise to a map
\begin{equation} \label{eq:gamma12}
H^*(B\Gamma_2;\bF_p) \longrightarrow H^*(B\Gamma_1;\bF_p), \quad
t \mapsto t, \; \theta_2 \mapsto 0.
\end{equation}
What is the relation with the standard quantum Steenrod operations? If one takes the surface underlying $Q\Sigma_{2,b}$ and reduces the symmetry to $\Gamma_1 \subset \Gamma_2$, there is some freedom to move the $p^2$ points around. In particular one can combine them into groups of $p$, which bubble off as follows:
\begin{equation}
\includegraphics{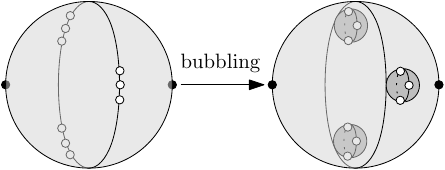}
\end{equation}
The outcome is this commutative diagram, where $b^{\ast_q p}$ is the $p$-fold quantum power of $b$:
\begin{equation} \label{eq:b1-b2}
\xymatrix{
H^*(M;\bF_p)[q^{\pm 1}] \ar[rr]^-{Q\Sigma_{1,b^{\ast_q p}}} && 
H^*(M;\bF_p)[q^{\pm 1},t,\theta_1]
\\
H^*(M;\bF_p)[q^{\pm 1}] \ar[rr]^-{Q\Sigma_{2,b}} \ar@{=}[u] && 
H^*(M;\bF_p)[q^{\pm 1},t,\theta_2] \ar[u]_-{\eqref{eq:gamma12}}
}
\end{equation}

Suppose that instead of coefficients in $\bF_p$, we use ones in $\bZ_p$. For the $\Gamma_1$-equivariant operations, the nontrivial part of the coefficient ring is still annihilated by $p$:
\begin{equation} \label{eq:bgamma-1}
H^*(B\Gamma_1;\bZ_p) = \bZ_p \oplus t\bF_p \oplus t^2\bF_p \oplus \cdots
\end{equation}
where $t$ maps to the corresponding element under reduction to $\bF_p$-coefficients. This explains why the operations \eqref{eq:quantum-steenrod-1} rarely (in fact, never under our assumptions \eqref{eq:p-adic-number-ring}, because all the cohomology can be lifted to $\bZ_p$) have a nontrivial $\theta_1$-component; other than that, it provides nothing new. On the next level,
\begin{equation} \label{eq:bgamma-2}
H^*(B\Gamma_2;\bZ_p) = \bZ_p \oplus t(\bZ/p^2) \oplus t^2(\bZ/p^2) \oplus \cdots,
\end{equation}
with the map from \eqref{eq:bgamma-2} to \eqref{eq:bgamma-1} being the obvious reduction mod $p$. The analogue of \eqref{eq:b1-b2} shows that $Q\Sigma_{1,b^{\ast_q p}}$ admits a mod $p^2$ lift, which is given by $Q\Sigma_{2,b}$. This in particular applies to idempotent elements $b$, which are their own $p$-th powers. For such idempotents, one can consider the tower of operations $Q\Sigma_{m,b}$ with symmetry $\Gamma_m = \bZ/p^m$ for any $m$, where the relevant coefficient ring is
\begin{equation} \label{eq:bgamma-3}
H^*(B\Gamma_m;\bZ_p) = \bZ_p \oplus t(\bZ/p^m) \oplus t^2(\bZ/p^m) \oplus \cdots
\end{equation}
In the (inverse) limit, one obtains operations indexed by 
\begin{equation}
H^*(B\Gamma_\infty;\bZ_p) = \bZ_p[t],
\end{equation}
where $\Gamma_\infty$ is the union of all $\Gamma_1 \subset \Gamma_2 \subset \cdots$. That is how the splittings in Theorem \ref{th:p-adic} are constructed. As we increase $m$, the number of incidence conditions with $b$ grows: hence, so does the inverse power of $q$, at a rate that is at most a constant times $p^m$. This, and degree considerations, leads to the occurrence of rings $\bZ_p\lla \tau \rra$. The geometric side of the construction  is essentially the same as in the $m = 1$ case from \cite{seidel-wilkins21}, with the difference being that the resulting information is inserted into a more refined algebraic setup.

\begin{remarks}
(i) One could think of a more general case where the idempotents are defined over a $p$-adic field $K$, but our construction does not adapt to that situation. Namely, if $b$ is such that $p^d b$ has coefficients in the ring of integers $R$, for some $d>0$, one wants to set ``\,$Q\Sigma_{m,b} = p^{-d \, p^m} Q\Sigma_{m,p^d b}$\!''; but that makes no sense, given that the nontrivial part of \eqref{eq:bgamma-3} is $p^m$-torsion. 

(ii) The quantum connection belongs to an $S^1$-equivariant world. The idea of using the discrete group $\Gamma_\infty = \bZ/p^\infty$ as a replacement for the topological group $S^1$ is by no means new (for an instance on a much deeper level than here, see \cite[Section II.1]{nikolaus-scholze18}). Of course, there are many other constructions in symplectic topology where one does work $S^1$-equivariantly; one that's close to our situation is the ``cap product'' action of the cohomology of the loop space on symplectic cohomology \cite{viterbo95, viterbo97a}.

(iii) More generally, but still by the same means, one could define the operations from Theorem \ref{th:p-adic}  in the ``weakly monotone'' situation. Beyond monotonicity, this means requiring that $\mathrm{dim}_{\bC}(M) \leq 3$, or else that $c_1(M)$ is divisible by $\mathrm{dim}_{\bC}(M)-2$. Unfortunately, those other cases do not mesh well with the existence of interesting idempotents in quantum cohomology: for instance, blowups of complex codimension two submanifolds would qualify only for $\mathrm{dim}_{\bC}(M) \leq 3$. 
\end{remarks}

\section{Equivariant cohomology}

\subsection{Group cohomology}
Let $\Gamma_m = \bZ/p^m$ be the finite cyclic group of order $p^m$, with generator $\sigma_m$. To compute the cohomology of that group with $R$-coefficients, where $R$ is as in \eqref{eq:p-adic-number-ring}, one can use the following two-periodic free resolution of the trivial $R[\Gamma_m]$-module:
\begin{equation} \label{eq:free-resolution}
C_*(E\Gamma_m) = \left\{
R[\Gamma_m] \xleftarrow{\sigma_m-1} R[\Gamma_m] \xleftarrow{1+\sigma_m+\cdots+\sigma_m^{p^m-1}} R[\Gamma_m] \xleftarrow{\sigma_m-1} \cdots \right\}
\end{equation}
From that, one gets the group cochain complex
\begin{equation} \label{eq:standard-complex}
C^*(B\Gamma_m) = 
\mathit{Hom}_{R[\Gamma_m]}(C_*(E\Gamma_m),R) = 
\left\{ R \xrightarrow{0} R \xrightarrow{p^m} R \xrightarrow{0} R \xrightarrow{p^m} \cdots \right\}
\end{equation}
The group cohomology is accordingly
\begin{equation}
H^*(B\Gamma_m) = \begin{cases} R & \ast = 0, \\
R/p^m & \ast > 0 \text{ even,} \\
0 & \ast \text{ odd.}
\end{cases}
\end{equation}
(We are already using topological notation, even if the constructions are set up in purely algebraic terms.) To get the cup product on group cohomology, one needs a diagonal map for the resolutions (see e.g.\ \cite[Chapter 5]{brown82}). We use
\begin{equation} \label{eq:diagonal-map}
\begin{aligned}
& C_*(E\Gamma_m) \longrightarrow C_*(E\Gamma_m) \otimes_R C_*(E\Gamma_m), \\
& c_i \longmapsto \sum_{j+k=i} c_j \otimes c_k + \sum_{\substack{j+k=i-1 \\ 0 \leq r<s < p^m}} \sigma_m^r( c_j\gamma) \otimes \sigma_m^s(c_k\gamma), \\
& c_i \gamma \longmapsto \sum_{j+k=i} c_j \otimes c_k\gamma + c_j\gamma \otimes \sigma_m(c_k).
\end{aligned}
\end{equation}
Here, $c_i$ stands for the obvious generator of \eqref{eq:free-resolution} in degree $-2i$ (all our complexes are cohomologically graded), and $c_i\gamma$ for the same in degree $-2i-1$. The map \eqref{eq:diagonal-map} is $R[\Gamma_m]$-linear, where the action on the right hand side is the diagonal one. From that, one gets the product
\begin{equation} \label{eq:cup-product}
\begin{aligned}
& C^*(B\Gamma_m) \otimes C^*(B\Gamma_m) \longrightarrow C^*(B\Gamma_m), \\
& t^j \otimes t^k \longmapsto t^{j+k}, \\
& t^j\theta \otimes t^k \longmapsto t^{j+k}\theta, \\
& t^j \otimes t^k\theta \longmapsto t^{j+k}\theta, \\
& t^j\theta \otimes t^k\theta \longmapsto \textstyle\frac{p^m(p^m-1)}{2}  t^{j+k+1}.
\end{aligned}
\end{equation}
Here, $t^i$ is the obvious generator of \eqref{eq:standard-complex} in degree $2i$, and $t^i\theta$ that in degree $2i+1$. On $H^*(B\Gamma_m)$, this yields a polynomial algebra structure with $t = t^1$ as generator.

\begin{remark}
Let's temporarily switch to mod $p^m$ coefficients, where $H^*(B\Gamma_m;R/p^m) = R/p^m$ in each nonnegative degree. Then, the last line of \eqref{eq:cup-product} reduces to
\begin{equation}
\theta \otimes \theta \longmapsto \begin{cases} 2^{m-1} t & p = 2, \\
0 & p>2.
\end{cases}
\end{equation}
This matches what one knows to be true from topology (that $\theta^2$ has to be $2$-torsion).
\end{remark}

Multiplication by $p$ yields an inclusion $\Gamma_m \hookrightarrow \Gamma_{m+1}$, $\sigma_m \mapsto \sigma_{m+1}^p$. On the resolutions \eqref{eq:free-resolution}, one has corresponding maps 
\begin{equation} \label{eq:map-of-resolutions}
\xymatrix{
R[\Gamma_m] \ar[d]^-{1} 
&&
R[\Gamma_m] \ar[ll]_-{\sigma_m-1} \ar[d]^-{1+\sigma_{m+1}+\cdots+\sigma_{m+1}^{p-1}}
&&
R[\Gamma_m] \ar[ll]_-{1+\sigma_m+\cdots} \ar[d]^-{1} & \ar[l] \cdots
\\
R[\Gamma_{m+1}] 
&&
\ar[ll]_-{\sigma_{m+1}-1} 
R[\Gamma_{m+1}]
&&\
\ar[ll]_-{1+\sigma_{m+1}+\cdots}
R[\Gamma_{m+1}] & \ar[l] \cdots
}
\end{equation}
These are maps of $R[\Gamma_m]$-modules, where the module structure on the bottom row is that induced the inclusion of groups. The labels on the vertical arrows indicate the image of the generator $1$. For instance, the middle $\downarrow$ is
\begin{equation}
\sigma_m^k \longmapsto \sigma_{m+1}^{pk}(1 + \sigma_{m-1} + \cdots + \sigma_{m+1}^{p-1}) =
\sigma_{m+1}^{pk} + \sigma_{m+1}^{pk+1} + \cdots +
\sigma_{m+1}^{pk+(p-1)}.
\end{equation}
The induced map on \eqref{eq:standard-complex} is 
\begin{equation} \label{eq:induced-map}
\xymatrix{
R \ar[r]^0 & 
R \ar[r]^p &
R \ar[r] & \cdots
& 
\\
R \ar[r]^0 \ar[u]_1 & 
R \ar[r]^{p^2} \ar[u]_p &
R \ar[u]_1 \ar[r] & \cdots
}
\end{equation}
On cohomology, one gets that 
\begin{equation} \label{eq:restrict-group}
H^*(B\Gamma_{m+1}) \longrightarrow H^*(B\Gamma_m)
\end{equation}
is the quotient map $R/p^{m+1} \rightarrow R/p^m$ in all nontrivial degrees. Let $\Gamma_\infty$ be the union (direct limit) of the $\Gamma_m$; in other words, the discrete subgroup of $S^1$ consisting of elements whose order is a power of $p$. 

\begin{lemma} \label{th:gamma-infinity}
Restriction to $\Gamma_m$ induces an isomorphism
\begin{equation} \label{eq:limit-cohomology}
H^*(B\Gamma_\infty) \iso \underleftarrow{\lim}_m \; H^*(B\Gamma_m) = \begin{cases}
R & \ast \text{ even,} \\
0 & \ast \text{ odd.}
\end{cases}
\end{equation}
\end{lemma}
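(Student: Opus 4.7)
The plan is to identify $H^*(B\Gamma_\infty)$ with $\varprojlim_m H^*(B\Gamma_m)$ via Milnor's $\varprojlim^1$ sequence, and then to evaluate the limit using the already-established description of the restriction maps \eqref{eq:restrict-group}. Specifically, I would first realize the resolutions $C_*(E\Gamma_m)$ as cellular chain complexes of standard CW models of $B\Gamma_m$, so that the maps \eqref{eq:map-of-resolutions} become cellular inclusions $B\Gamma_m \hookrightarrow B\Gamma_{m+1}$ and $B\Gamma_\infty = \bigcup_m B\Gamma_m$ in the weak topology. Then Milnor's sequence
$$
0 \to \varprojlim^1_m H^{n-1}(B\Gamma_m) \to H^n(B\Gamma_\infty) \to \varprojlim_m H^n(B\Gamma_m) \to 0
$$
reduces the lemma to checking $\varprojlim^1$-vanishing and evaluating the limit.

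The Mittag-Leffler check is immediate: by \eqref{eq:restrict-group} and the calculation preceding it, the transition maps $H^n(B\Gamma_{m+1}) \to H^n(B\Gamma_m)$ are surjective in every degree --- identity in degree $0$, the canonical quotient $R/p^{m+1} \twoheadrightarrow R/p^m$ in positive even degrees, and trivially $0 \to 0$ in odd. So $\varprojlim^1 = 0$, and the limit evaluates to $\varprojlim R = R$ in degree $0$, $\varprojlim R/p^m = R$ in positive even degrees (using the $p$-adic completeness of $R$, part of \eqref{eq:p-adic-number-ring}), and $0$ in odd degrees, matching \eqref{eq:limit-cohomology}.

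The main obstacle is stylistic rather than substantive: staying within the paper's algebraic framework instead of invoking a topological result. A parallel purely algebraic route is to form $C_*(E\Gamma_\infty) := \varinjlim_m C_*(E\Gamma_m)$, a flat resolution of the trivial $R[\Gamma_\infty]$-module $R$ (filtered colimits are exact and preserve flatness), and to use that $\mathrm{Hom}_{R[\Gamma_\infty]}(-,R)$ converts this colimit to the termwise inverse limit complex $\varprojlim_m C^*(B\Gamma_m)$. That limit complex has $R$ in every even position (identity transitions) and vanishes in every odd position (multiplication-by-$p$ transitions combined with $\bigcap_k p^k R = 0$), with zero differentials, directly yielding the answer. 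A short comparison with a projective resolution is then needed to justify that this flat resolution genuinely computes $\mathrm{Ext}^*_{R[\Gamma_\infty]}(R,R) = H^*(B\Gamma_\infty)$.
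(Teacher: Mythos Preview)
Your main approach via Milnor's $\varprojlim^1$ sequence is correct and genuinely different from the paper's argument. The paper works at the chain level: it identifies $H^*(B\Gamma_\infty)$ with the cohomology of the (derived) inverse limit of the cochain complexes $C^*(B\Gamma_m)$, and then invokes Jensen's theorem \cite{jensen70} that $\varprojlim$ is exact on finitely generated $R$-modules to replace the derived limit by the naive one. The point the paper emphasizes is that the chain-level system \emph{fails} Mittag-Leffler (the odd-degree transition maps in \eqref{eq:induced-map} are multiplication by $p$), so one genuinely needs something beyond the standard criterion; Jensen's result, specific to complete Noetherian rings, supplies it. You sidestep this entirely by passing to cohomology first, where the transition maps \eqref{eq:restrict-group} \emph{are} surjective and Mittag-Leffler holds trivially. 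Your route is more elementary and more portable; the paper's route stays inside the purely algebraic framework it has set up and highlights a subtlety about the chain-level limit.

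One small imprecision: for the lens-space models underlying the paper's resolutions \eqref{eq:free-resolution}, the map $B\Gamma_m \to B\Gamma_{m+1}$ is the $p$-fold covering $S^\infty/\Gamma_m \to S^\infty/\Gamma_{m+1}$, not a cellular inclusion. To run Milnor's sequence as stated you should either switch to the bar-construction model (where $\Gamma_m \hookrightarrow \Gamma_{m+1}$ does induce a subcomplex inclusion) or replace the tower by its mapping telescope; either way the argument goes through unchanged. Your alternative algebraic route---computing the naive inverse limit of cochain complexes directly as $R \to 0 \to R \to 0 \to \cdots$---is essentially what the paper does; the paper's justification for why this naive limit is the right object is precisely Jensen's exactness theorem, which is cleaner than the flat-resolution comparison you sketch.
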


\begin{proof}
The corresponding statement for group homology is standard: 
\begin{equation}
H_*(B\Gamma_\infty) \iso \underrightarrow{\lim}_m \;H_*(B\Gamma_m). 
\end{equation}
Dualizing shows that $H^*(B\Gamma_\infty)$ is isomorphic to the cohomology of the derived inverse limit of $C^*(B\Gamma_m)$. Now, inverse limits are exact for finitely generated $R$-modules \cite[Theorem 1]{jensen70}. Hence, one can equivalently use the naive inverse limit of the $C^*(B\Gamma_m)$ (even though it violates the Mittag-Leffler condition). That naive limit is
\begin{equation}
R \longrightarrow 0 \longrightarrow R \longrightarrow 0 \longrightarrow R \rightarrow \cdots
\end{equation}
and its cohomology agrees with $\underleftarrow{\lim}_m \; H^*(B\Gamma_m)$.
\end{proof}

\begin{lemma}
The inclusion $\Gamma_\infty \rightarrow S^1$ (considering $\Gamma_\infty$ as a discrete group, and $S^1$ as a topological group) induces an isomorphism $H^*(BS^1) \iso H^*(B\Gamma_m)$.
\end{lemma}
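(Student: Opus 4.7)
The plan is to compute the restriction map through the finite subgroups $\Gamma_m$. By Lemma~\ref{th:gamma-infinity}, $H^*(B\Gamma_\infty) = \varprojlim_m H^*(B\Gamma_m)$, while $H^*(BS^1;R) = R[t]$ with $|t|=2$ is the standard projective-space computation. So it is enough to identify the restriction maps $H^*(BS^1) \to H^*(B\Gamma_m)$ and pass to the inverse limit.

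For each $m$, the Borel construction presents $B\Gamma_m = ES^1/\Gamma_m$ as the total space of a principal circle bundle over $BS^1 = ES^1/S^1$, with fiber $S^1/\Gamma_m$ identified with $S^1$ via $p^m$-th roots. Under that identification, the bundle is the $p^m$-th power of the tautological circle bundle over $BS^1$, so its Euler class is $p^m t$. The Gysin sequence
\begin{equation*}
\cdots \to H^{i-2}(BS^1) \xrightarrow{\cup \, p^m t} H^i(BS^1) \to H^i(B\Gamma_m) \to H^{i-1}(BS^1) \to \cdots ,
\end{equation*}
combined with the vanishing of $H^{\mathrm{odd}}(BS^1)$, identifies the restriction map with the canonical surjection $R[t] \twoheadrightarrow R[t]/(p^m)$ in degrees $\geq 2$. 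This matches the computation of $H^*(B\Gamma_m)$ recorded after \eqref{eq:standard-complex}, and is compatible with the transition maps \eqref{eq:restrict-group} in the tower.

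Passing to the inverse limit over $m$ and using that $R$ is $p$-adically complete by \eqref{eq:p-adic-number-ring}, the tower $(R/p^m)$ has limit $R$, and the compatible system of quotient maps $R \twoheadrightarrow R/p^m$ induces the identity in the limit. Hence $H^*(BS^1) \to H^*(B\Gamma_\infty)$ is an isomorphism in each degree. The one point requiring care is the Euler class identification; equivalently, one can verify this by recognizing the classifying map $B\Gamma_m \to BS^1$ as the standard inclusion $L^\infty_{p^m} \hookrightarrow \bC P^\infty$ of infinite lens spaces, whose action on $H^2$ is the known quotient $R \twoheadrightarrow R/p^m$.
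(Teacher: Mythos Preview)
Your proof is correct and follows essentially the same approach as the paper: both use the Gysin sequence for the circle bundle $B\Gamma_m \to BS^1$ to identify the restriction map as reduction mod $p^m$ in positive even degrees, and then pass to the inverse limit (invoking Lemma~\ref{th:gamma-infinity}). You are more explicit about the Euler class and the role of $p$-adic completeness, but the argument is the same.
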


\begin{proof}
This is best seen topologically: $B\Gamma_m$ is a circle bundle over $BS^1$, so one has a Gysin sequence
\begin{equation} \label{eq:gysin}
\cdots \rightarrow H^*(BS^1) \longrightarrow H^*(B\Gamma_m) \rightarrow H^{*-1}(BS^1) \rightarrow \cdots
\end{equation}
This shows that $H^*(BS^1) \rightarrow H^*(B\Gamma_m)$ is onto (the identity in degree $0$, and reduction mod $p^m$ in positive even degrees). Passing to the inverse limit yields the desired result.
\end{proof}


\subsection{Equivariant cohomology with coefficients}
Let $V$ be a chain complex of $R$-modules, with an action of $\Gamma_m$. Define
\begin{equation} \label{eq:v-coefficients}
C^*(B\Gamma_m;V) = \mathit{Hom}_{R[\Gamma_m]}(C_*(E\Gamma),V) = \left\{ V \xrightarrow{\sigma_m - 1} V \xrightarrow{1+\sigma_m + \cdots} V \xrightarrow{\sigma_m - 1} \cdots \right\}
\end{equation}
Here, the notation stands for collapsing a bicomplex (with the standard Koszul signs), but taking the direct product of the copies of $V$ involves. Write $H^*(B\Gamma_m;V)$ for the cohomology. This construction is obviously functorial under (homotopy classes of) $\Gamma_m$-equivariant chain maps. The relevant generalization of \eqref{eq:induced-map}, derived as before from \eqref{eq:map-of-resolutions}, is
\begin{equation} \label{eq:induced-map-with-coefficients}
\begin{aligned}
& C^*(B\Gamma_{m+1};V) \longrightarrow C^*(B\Gamma_m;V), \\
& t^i v \longmapsto t^i v, \\
& t^i\theta v \longmapsto t^i\theta (v + \sigma_{m+1} v + \cdots + \sigma_{m+1}^{p-1} v).
\end{aligned}
\end{equation}
There is also a generalization of the cup product,
\begin{equation} \label{eq:cup-product-with-coefficients}
H^*(B\Gamma_m;V) \otimes H^*(B\Gamma_m;W) \longrightarrow H^*(B\Gamma_m;V \otimes W),
\end{equation}
where $V \otimes W$ carries the diagonal action. One can use \eqref{eq:diagonal-map} to derive an explicit chain level formula for this product, generalizing \eqref{eq:cup-product}:
\begin{equation} \label{eq:chain-cup-product-with-coefficients}
\begin{aligned}
& C^*(B\Gamma_m;V) \otimes C^*(B\Gamma_m;W) \longrightarrow C^*(B\Gamma_m;V \otimes W), \\
& t^i v \otimes t^j w \longmapsto t^{i+j} (v \otimes w), \\
& t^j\theta v \otimes t^k w \longmapsto t^{j+k}\theta (v \otimes \sigma_m(w)), \\
& t^j v \otimes t^k\theta w\longmapsto t^{j+k}\theta (v \otimes w), \\
& t^j\theta v \otimes t^k\theta w \longmapsto \!\!\!\sum_{0 \leq r < s < p^m} t^{j+k+1}\big( \sigma_m^r(v) \otimes \sigma_m^s(w) \big).
\end{aligned}
\end{equation}

For any chain complex $B$, we can consider $V = B^{\otimes p^m}$ with the $\Gamma_m$-action which cyclically permutes the factors (with signs). If $b \in B$ is a cocycle of even degree, then $b^{\otimes p^m} \in C^0(B\Gamma_m;B^{\otimes p^m})$ is an equivariant cocycle. This construction satisfies (see e.g.\ \cite[Lemma 2.5]{seidel-wilkins21}):

\begin{lemma} \label{th:diagonal}
The class of $[b^{\otimes p^m}] \in H^*(B\Gamma_m;B^{\otimes p^m})$ depends only on $[b] \in H^*(B)$. 
\end{lemma}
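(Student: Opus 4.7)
To prove that $[b^{\otimes p^m}] \in H^*(B\Gamma_m; B^{\otimes p^m})$ depends only on $[b] \in H^*(B)$, I would first verify that $b^{\otimes p^m}$ is a total cocycle of the bicomplex $C^*(B\Gamma_m; B^{\otimes p^m})$: Leibniz together with $db = 0$ gives $d_V(b^{\otimes p^m}) = 0$, while the cyclic shift $\sigma_m$ acts on $b^{\otimes p^m}$ with Koszul sign $(-1)^{|b|^2(p^m - 1)} = +1$ (using $|b|$ even), so also $(\sigma_m - 1)(b^{\otimes p^m}) = 0$. Hence $b^{\otimes p^m}$ sits in bidegree $(0, p^m|b|)$ and represents a class.

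Given $b' = b + d\eta$ with $|\eta| = |b|-1$ odd, I would construct an explicit equivariant primitive $\beta = \beta_0 + \beta_1 + \cdots$ for $(b')^{\otimes p^m} - b^{\otimes p^m}$, with $\beta_i$ in bidegree $(i, p^m|b| - 1 - i)$. Because every tensor summand of $\beta_i$ must consist of $b$'s, $b'$'s, and $\eta$'s and must carry internal degree $p^m|b| - 1 - i$, it must contain exactly $i + 1$ copies of $\eta$, so the series automatically terminates at $i = p^m - 1$. The leading term is the telescope
\[
\beta_0 = \sum_{k=0}^{p^m - 1} (b')^{\otimes k} \otimes \eta \otimes b^{\otimes (p^m - k - 1)},
\]
for which $d_V \beta_0 = \sum_k (b')^{\otimes k} \otimes (b' - b) \otimes b^{\otimes(p^m - k - 1)}$ telescopes to $(b')^{\otimes p^m} - b^{\otimes p^m}$ once Koszul signs are killed by $|b|$ even. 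A direct computation of $(\sigma_m - 1)\beta_0$ (cycling the rightmost factor to the front and comparing positions of $\eta$) shows that the result is $d_V$-exact, with an explicit primitive $\beta_1$ of the analogous form with two $\eta$'s. I would then iterate: the horizontal boundary, alternating $\sigma_m - 1$ and the norm $N = 1 + \sigma_m + \cdots + \sigma_m^{p^m - 1}$, applied to $\beta_i$ is $d_V$-exact with an explicit primitive $\beta_{i+1}$ built from one more insertion of $\eta$, until the recursion terminates at $i = p^m - 1$.

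The main obstacle is the combinatorial bookkeeping needed to check at each step that the horizontal image of $\beta_i$ really is $d_V$-exact and to write down $\beta_{i+1}$ explicitly; the interaction at odd horizontal degrees between the cyclic action and the norm map $N$ (which in the cup product formula \eqref{eq:chain-cup-product-with-coefficients} already involved sums over ordered pairs $0 \leq r < s < p^m$) is the least automatic part. A cleaner but essentially equivalent viewpoint is to introduce the dg interval $\Omega[0, 1] = R\langle s, ds\rangle$ and the interpolating cocycle $\tilde b = (1-s)b + sb' + ds \cdot \eta \in B \otimes \Omega[0, 1]$; taking its $p^m$-th tensor power and pushing forward along the evaluations at $s = 0, 1$ recovers $b^{\otimes p^m}$ and $(b')^{\otimes p^m}$, so the statement reduces to the existence of an equivariant contracting homotopy of $\Omega[0, 1]^{\otimes p^m}$ onto the subcomplex $R \cdot 1^{\otimes p^m}$. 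Such a homotopy can be written down explicitly from the standard antiderivative homotopy on each factor, and gives the same primitive as the direct construction.
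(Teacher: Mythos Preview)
The paper does not give its own proof here; it simply cites \cite[Lemma 2.5]{seidel-wilkins21}. The argument in that reference is precisely the direct construction you outline: build the primitive $\beta_0,\beta_1,\dots$ inductively, using at each stage that the horizontal boundary of $\beta_i$ is a $d_V$-cocycle (by the previous step) which is visibly $d_V$-exact, and observe that the degree count (each $\beta_i$ contains exactly $i+1$ copies of $\eta$) forces termination at $i=p^m-1$. Your proposal is correct and matches the cited proof.

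One small caveat on your alternative formulation: the factorwise antiderivative on $\Omega[0,1]^{\otimes p^m}$ is not literally $\Gamma_m$-equivariant, so ``equivariant contracting homotopy'' overstates things slightly. What actually makes the dg-interval argument work is that $\Omega[0,1]^{\otimes p^m}$ is a \emph{bounded} complex equivariantly quasi-isomorphic to $R$, so the column-filtration spectral sequence for $C^*(B\Gamma_m;-)$ converges and both evaluations $ev_0,ev_1$ induce the same map on $H^*(B\Gamma_m;-)$. Unwinding that convergence reproduces exactly your $\beta_i$, so the two viewpoints are indeed equivalent.
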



In the special case where $V = C^*(X)$ is the ($R$-coefficient, cellular or singular) chain complex of a space with a $G$-action, $H^*(B\Gamma_m;C^*(X)) = H^*_{\Gamma_m}(X)$ is (Borel) equivariant cohomology. We will need to look at one instance of this, where the space is the two-sphere
\begin{equation}
S = \bar{\bC} = \bC \cup \{\infty\},
\end{equation}
with the rotational action of $\Gamma_m$. We have integration over the equivariant fundamental cycle, as well as restriction to the fixed points:
\begin{equation} \label{eq:cohomology-of-s} 
\begin{aligned}
& {\textstyle \int_S}: H^*_{\Gamma_m}(S) \longrightarrow
H^{*-2}_{\Gamma_m}(\mathit{point}), \\
& \rho_0,\rho_\infty : H^*_{\Gamma_m}(S) \longrightarrow H^*_{\Gamma_m}(\mathit{point}), 
\end{aligned}
\end{equation}

\begin{lemma} \label{th:localisation}
The following diagram commutes:
\begin{equation}
\xymatrix{
\ar@/_1pc/[rrrr]_-{\rho_0 - \rho_\infty}
H^*_{\Gamma_m}(S) \ar[rr]^-{\int_S} &&
H^{*-2}_{\Gamma_m}(\mathit{point}) \ar[rr]^-{t} && H^*_{\Gamma_m}(\mathit{point})
}
\end{equation}
\end{lemma}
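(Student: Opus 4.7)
The plan is to reduce to a computation on a single generator and then verify it in an explicit chain model. Both sides of the identity are $H^*_{\Gamma_m}(\mathit{pt})$-linear maps from $H^*_{\Gamma_m}(S)$ to $H^*_{\Gamma_m}(\mathit{pt})$: this is clear for $\rho_0, \rho_\infty$ (ring pullbacks along $\{0\}, \{\infty\} \hookrightarrow S$), and for $\int_S$ it is the projection formula. The Serre spectral sequence of $S_{h\Gamma_m} \to B\Gamma_m$ collapses at $E_2$: the only potentially nonzero differential, $d_3 : E_3^{p,2} \to E_3^{p+3,0}$, lands in odd-degree cohomology of $B\Gamma_m$, which vanishes. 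Hence $H^*_{\Gamma_m}(S)$ is generated as an $H^*_{\Gamma_m}(\mathit{pt})$-module by the unit $1$ and a single degree-$2$ class $[\omega]$ lifting the generator of $H^2(S)$. On $1$ both sides of the identity vanish trivially, so the lemma reduces to a single check on $[\omega]$.

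To carry that out, I would fix a $\Gamma_m$-equivariant CW structure on $S$ with 0-cells $\{0,\infty\}$, a free $\Gamma_m$-orbit of $p^m$ meridian 1-cells $a_0,\dots,a_{p^m-1}$ running from $0$ to $\infty$, and a free $\Gamma_m$-orbit of $p^m$ lune 2-cells $b_0,\dots,b_{p^m-1}$ with $\partial b_i = a_{i+1}-a_i$. A representative cocycle for $[\omega]$ is built in the double complex \eqref{eq:v-coefficients} applied to $V = C^*(S)$: start from $b_0^*$ in column $0$; cancel its failure to be $(\sigma_m-1)$-invariant by adding a column-$1$ term of the form $\theta \cdot a_j^*$ (using $\delta a_j^* = \pm(b_{j-1}^*-b_j^*)$); then cancel the image of that correction under the norm $N$ by a column-$2$ term $t \cdot [\infty]^*$ (using $\delta[\infty]^* = \pm \sum_i a_i^*$).

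From this representative $\omega$ I read off the three values. First, $\int_S [\omega] = 1$ from the pairing of the column-$0$ piece $b_0^*$ with the fiber fundamental class $\sum_i b_i$. Second, $\rho_0([\omega]) = 0$, because restriction to $\{0\}$ kills the $b_0^*$ and $a_j^*$ pieces (unsupported at $0$) and also kills $[\infty]^*$ at $0$. Third, $\rho_\infty([\omega]) = \pm t$, from the surviving $t \cdot [\infty]^*$ term. Combining these yields $\rho_0([\omega]) - \rho_\infty([\omega]) = \pm t \cdot \int_S[\omega]$, and the sign agrees with the claim once orientation conventions are fixed consistently.

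The main obstacle is careful bookkeeping of signs, stemming from the orientations on $S$ and on its cells, the CW boundary conventions, and the Koszul signs in the total differential of \eqref{eq:v-coefficients}. Conceptually the lemma is the Atiyah--Bott/Berline--Vergne localization identity for the $\Gamma_m$-action on $S \cong \bP(\bC \oplus \bC)$, where the equivariant Euler classes at the two fixed points are $\pm t$; what is achieved is the integer-coefficient version of that identity, valid without inverting $t$ (which would be illegitimate, since $t$ is torsion in $H^*_{\Gamma_m}(\mathit{pt})$).
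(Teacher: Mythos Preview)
Your proof is correct, and the approach is genuinely different from the paper's. Rather than computing directly in a cellular model, the paper observes that the Gysin sequence for the circle bundle $S \times_{\Gamma_m} E\Gamma_m \to S \times_{S^1} ES^1$ gives a surjection $H^*_{S^1}(S) \twoheadrightarrow H^*_{\Gamma_m}(S)$ (since $H^*_{S^1}(S)$ is concentrated in even degrees), reducing the statement to the standard $S^1$-equivariant localization theorem for $S \cong \bP^1$. Your route is more self-contained: it stays entirely within the $\Gamma_m$-world and uses only the explicit chain models already developed in the paper, at the cost of the sign bookkeeping you acknowledge. The paper's argument is shorter but imports the Atiyah--Bott formula as a black box; yours would be the natural choice if one wished to avoid the topological $S^1$-theory altogether. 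One small remark: your Serre spectral sequence step shows that $1$ and $[\omega]$ generate $H^*_{\Gamma_m}(S)$ as a module, but the lift $[\omega]$ is only determined up to adding a multiple of $t\cdot 1$; you should note (as is implicit in your computation) that both $\rho_0-\rho_\infty$ and $t\int_S$ annihilate $t\cdot 1$, so the check on $[\omega]$ is independent of the lift chosen.
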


\begin{proof}
Generalizing \eqref{eq:gysin}, one has that $X \times_{\Gamma_m} E\Gamma_m$ is a circle bundle over $X \times_{\Gamma_m} ES^1$, leading to a Gysin sequence
\begin{equation}
\cdots \rightarrow
H^*_{S^1}(X) \longrightarrow H^*_{\Gamma_m}(X) \rightarrow H^{*-1}_{S^1}(X) \rightarrow \cdots
\end{equation}
Take $X = S$. Since $H^*_{S^1}(S)$ is concentrated in even degrees, the map to $\Gamma_m$-equivariant cohomology is onto, hence it suffices to prove the statement for $H^*_{S^1}$ instead, but that is an instance of the standard localisation theorem.
\end{proof}

\section{The $p$-adic operation}

\subsection{Construction for fixed $m$}
The geometric construction generalizes \cite[Section 4]{seidel-wilkins21} in a straightforward way. We will summarize the strategy and outcome, with just enough details so that one check that the generalization goes through, and then explain the algebraic formalism into which the outcome is inserted. Choose $m \geq 1$ and $A \in H_2(M;\bZ)$. Everything is based on parametrized moduli spaces of pseudo-holomorphic maps, with Morse-theoretic adjacency conditions. These are spaces of pairs $(w,u)$ of the following kind:
\begin{equation} \label{eq:parametrized-maps}
\left\{
\begin{aligned}
& w \in S^\infty, \\
& u: S = \bar{\bC} \longrightarrow M, \;\; [u] = A, \\
& \half(du + J \circ du \circ j)_z = \nu_{w,z,u(z)}: TS_z \rightarrow TM_{u(z)}, \\
& u(0) \in W^u(x_0), \; u(\infty) \in W^s(x_\infty), \\
& u(\zeta_{m,j}) \in W^u(x_j) \text{ for $j = 1,\dots,p^m$, where $\zeta_{m,j} = e^{2\pi i (j-1/2)/p^m}$.}
\end{aligned}
\right.
\end{equation}
Here, $S^\infty \subset \bC^\infty$ is our model for $E\Gamma_m$, with the action rotating all complex coordinates. $J$ is a compatible almost complex structure on $M$. The inhomogeneous term $\nu$ is parametrized by $w \in S^\infty$, and satisfies a $\Gamma_m$-equivariance condition with respect to the action on $S^\infty \times S$ \cite[Equation (4.1)]{seidel-wilkins21}.  We fix a Morse function $f$ and Riemannian metric on $M$, forming a Morse-Smale pair. $(x_0,x_1,\dots,x_{p^{m}},x_\infty)$ are critical points of $f$, and $W^s/W^u$ their stable/unstable manifolds for the gradient flow (one can picture the incidence conditions as a pseudo-holomorphic sphere with Morse half-trajectories attached, as in Figure \ref{fig:half-trajectories}). We use the cell decomposition of $S^\infty$ from \cite[Section 2a]{seidel-wilkins21}, where the cells of each dimension form a free $\Gamma_m$-orbit
\begin{equation} \label{eq:cells-1}
\Delta_{m,d},\, \sigma_m(\Delta_{m,d}),\, \dots,\, \sigma_m^{p^m-1}(\Delta_{m,d}) \subset S^\infty.
\end{equation}
Explicitly, in coordinates $(w_0,w_1,\dots) \in S^\infty \subset \bC^\infty$, the defining equations for $\Delta_{m,d}$ are
\begin{equation}
\label{eq:cells-2}
\begin{cases} 
w_{d/2} \geq 0, \; w_{d/2+1} = w_{d/2+1} = \cdots = 0 & \text{if $d$ is even}, \\
e^{-2\pi i \theta} w_{d/2-1/2} \geq 0 \text{ for $\theta \in [0,2\pi/p^m]$,} \;
w_{d/2+1/2} = w_{d/2+3/2} = \cdots = 0 & \text{if $d$ is odd.} 
\end{cases}
\end{equation}
With suitable orientations, the cellular chain complex reproduces \eqref{eq:free-resolution} \cite[Equations (2.7), (2.8)]{seidel-wilkins21}:
\begin{equation} \label{eq:cellular-differential}
\partial [\Delta_{m,d}] = 
\begin{cases} 
[\Delta_{m,d-1}] + \sigma_m[\Delta_{m,d-1}] + \cdots + \sigma_m^{p^m-1}[\Delta_{m,d-1}]
& \text{$d>0$ even,} \\ 
\sigma_m[\Delta_{m,d-1}] - [\Delta_{m,d-1}]
& \text{$d$ odd.}
\end{cases}
\end{equation}
One restricts \eqref{eq:parametrized-maps} to $w \in \Delta_{m,d} \setminus \partial \Delta_{m,d}$. Counting points in the resulting zero-dimensional moduli spaces yields numbers 
\begin{equation} \label{eq:counting}
\begin{aligned}
& n_A(\Delta_{m,d},x_0,x_1,\dots,x_{p^m},x_\infty) \in \bZ
\\ & \qquad \text{ for } |x_0|-|x_1|-\cdots-|x_{p^m}|-|x_\infty| + 2\textstyle\int_A c_1(M) + d = 0.
\end{aligned}
\end{equation}
Write $n(x_-,x_+) \in \bZ$ for the count of gradient flow lines connecting orbits with $|x_+| =|x_-| + 1$, which defines the Morse cohomology differential. An analysis of the ends of the one-dimensional spaces \cite[Lemma 4.1]{seidel-wilkins21} shows that \eqref{eq:counting} satisfy the following relations:
\begin{equation} \label{eq:parametrized-numbers}
\begin{aligned}
& 
\sum_x \pm n(x_0,x) n_A(\Delta_{m,d},x,x_1,\dots,x_{p^m},x_\infty) \\ & \quad +
\sum_{x,k} \pm n(x_k,x) n_A(\Delta_{m,d},x_0,x_1,\dots,x_{k-1} x,x_{k+1},\dots,x_\infty) \\ & \quad +
\sum_x \pm n_A(\Delta_{m,d}, x_0,x_1,\dots,x_{p^m}, x) n(x,x_\infty) \\
&
= \begin{cases} \pm n_A(\Delta_{m,d-1},x_0,x_1,\dots,x_{p^m},x_\infty) \pm
\text{cyclic permutations of $(x_1,\dots,x_{p^m})$}
& \text{$d$ even}, \\
\mp n_A(\Delta_{m,d-1},x_0,x_{p^m},x_1,\dots,x_{p^m-1},x_\infty)
\pm n_A(\Delta_{m,d-1},x_0,x_1,\dots,x_{p^m},x_\infty)  & \text{$d$ odd.} \end{cases}
\end{aligned} 
\end{equation}
We have omitted the signs, among which are the Koszul signs that occur when cyclically permuting entries; but have indicated one extra degree-independent sign difference in the last case, which comes from \eqref{eq:cellular-differential}.
\begin{figure}
\begin{centering}
\includegraphics{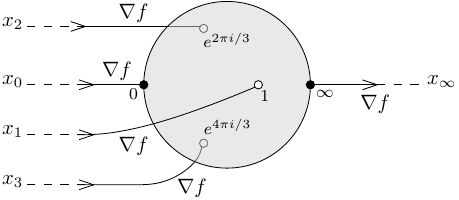}
\caption{\label{fig:half-trajectories}Schematic picture of the constraints on marked points from \eqref{eq:parametrized-maps}.}
\end{centering}
\end{figure}%

The algebraic encapsulation of this goes as follows. Take the Morse homology and cohomology chain complexes, $C_*(f)$ and $C^*(f)$ (with $R$-coefficients; both cohomologically graded, so $C_*(f)$ is concentrated in nonpositive degrees). For any $d$, define a map of degree $-d-2\int_A c_1(M)$,
\begin{equation} \label{eq:chain-quantum-sigma-2}
\begin{aligned}
& C^*(f) \longrightarrow C_*(f)^{\otimes p^m} \otimes C^*(f),
\\
& 
x_0 \longmapsto \sum_{x_1,\dots,x_{p^m},x_\infty} n_A(\Delta_d,x_0,x_1,\dots,x_{p^m},x_\infty) (x_1 \otimes \cdots \otimes x_{p^m}) \otimes x_\infty.
\end{aligned}
\end{equation}
Consider the cyclic permutation action of $\Gamma_m$ on $C_*(f)^{\otimes p^m}$. Here, the convention is that the generator of $\sigma_m$ should move factors to the left, $\sigma_m(x_1 \otimes \cdots \otimes x_{p^m}) = \pm (x_2 \otimes \cdots \otimes x_{p^m} \otimes x_1)$. The relations \eqref{eq:parametrized-numbers} say that if we add up over all $d$, thinking of \eqref{eq:chain-quantum-sigma-2} as the component of $C^d(B\Gamma_m;\cdot)$, the outcome is a chain map of degree $-2\int_A c_1(M)$,
\begin{equation} \label{eq:chain-quantum-sigma-2b}
C^*(f) \longrightarrow C^*(B\Gamma_m;C_*(f)^{\otimes p^m}) \otimes C^*(f).
\end{equation}

\begin{lemma} \label{th:gromov}
(i) Up to chain homotopy, \eqref{eq:chain-quantum-sigma-2b} is independent of all choices.

(ii) It is nullhomotopic unless $A = 0$ or $\int_A c_1(M)>0$.

(iii) There are only finitely many $A$ for which \eqref{eq:chain-quantum-sigma-2b} is nonzero.
\end{lemma}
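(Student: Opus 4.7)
My plan is to follow the template established for $m=1$ in \cite[Section 4]{seidel-wilkins21}, where all three parts generalize in a straightforward manner.

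For \emph{(i)}, I would interpolate two choices of auxiliary data $(J,\nu,f,g)$ by a generic one-parameter family $(J_s,\nu_s,f_s,g_s)$, $s\in[0,1]$, with each $\nu_s$ chosen $\Gamma_m$-equivariant on $S^\infty\times S$. The parametrized moduli spaces over each cell $\Delta_{m,d}$ gain one extra dimension, and boundary analysis of their one-dimensional strata packages the Morse degenerations, the cell-boundary contributions (cyclic permutations, as in \eqref{eq:parametrized-numbers}), and the endpoint values at $s=0,1$ into the identity of a chain homotopy in $\mathit{Hom}\bigl(C^*(f),C^*(B\Gamma_m;C_*(f)^{\otimes p^m})\otimes C^*(f)\bigr)$. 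Sphere bubbling is excluded by the standard index bookkeeping for monotone $M$.

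For \emph{(ii)}, I would use \emph{(i)} to reduce to small (or zero) $\nu$. At $\nu=0$, a solution of \eqref{eq:parametrized-maps} is a genuine $J$-holomorphic sphere representing $A$, so $\int_A\omega\geq 0$ with equality forcing $A=0$; monotonicity $[\omega]=\lambda c_1(M)$ with $\lambda>0$ then gives $A=0$ or $\int_A c_1(M)>0$. Running Gromov compactness along a sequence $\nu_k\to 0$ (no bubbling thanks to monotonicity) shows that for $A\neq 0$ with $\int_A c_1(M)\leq 0$ the moduli spaces are empty once $\nu$ is sufficiently small, so \eqref{eq:chain-quantum-sigma-2b} vanishes outright for such $A$ and is in particular nullhomotopic.

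For \emph{(iii)}, the dimension formula in \eqref{eq:counting} together with $d\geq 0$ and the bound $|x_j|\leq\dim_\bR M$ gives
\begin{equation*}
2\textstyle\int_A c_1(M)\;\leq\;\sum_{i=1}^{p^m}|x_i|+|x_\infty|-|x_0|\;\leq\;(p^m+1)\dim_\bR M,
\end{equation*}
an a priori upper bound on $\int_A c_1(M)$. Monotonicity converts this into a uniform bound on the symplectic area $\int_A\omega$, and Gromov compactness then guarantees that only finitely many classes $A$ support pseudo-holomorphic representatives of bounded area. The main obstacle I anticipate is the equivariant transversality used throughout: one needs the space of $\Gamma_m$-equivariant inhomogeneous perturbations $\nu$ on $S^\infty\times S$ to be large enough to achieve regularity on the open strata, which relies on $\Gamma_m$ acting freely on the open cell interiors \eqref{eq:cells-1}--\eqref{eq:cells-2} and on $S\setminus\{0,\infty\}$. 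This setup is already in place from the $m=1$ case in \cite[Section 4]{seidel-wilkins21} and extends without modification with $p^m$ in place of $p$.
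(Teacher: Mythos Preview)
Your proposal is correct and matches the paper's (terse) proof: one-parameter interpolation for (i), deformation to $\nu=0$ for (ii), and degree bounds combined with Gromov compactness for (iii). The only cosmetic difference is that for (ii) the paper packages the interpolation to $\nu=0$ directly as a nullhomotopy (the $r=0$ stratum is empty by energy reasons), rather than first arguing emptiness for small $\nu$ via compactness and then invoking (i).
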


\begin{proof}
(i) This is a standard argument using an extra parameter $r \in [0,1]$. 

(ii) Assume that $A$ does not satisfy our conditions. Adapt the previous parametrized setup so that for $r = 0$ the inhomogeneous terms $\nu$ are zero, in which case we're looking at straight pseudo-holomorphic maps; for energy reasons, this means that the $r = 0$ stratum is empty, giving a nullhomotopy.

(iii) Gromov compactness tells us that, given upper bounds on $d$ and on $\int_A c_1(A)$, there are only finitely many $A$ for which the numbers \eqref{eq:counting} can be nonzero. On the other hand, for degree reasons, these numbers must be zero if $d$ or $c_1(A)$ are large.
\end{proof}

All other forms of the quantum Steenrod operations are derived from \eqref{eq:chain-quantum-sigma-2b} by purely algebraic manipulations. Take $p^m$ tensor copies of the canonical pairing between Morse chains and cochains, $C^*(f)^{\otimes p^m} \otimes C_*(f)^{\otimes p^m} \longrightarrow R$. From that, the product \eqref{eq:chain-cup-product-with-coefficients}, and the functoriality of $C^*(B\Gamma_m;\cdot)$, one gets a map
\begin{equation} \label{eq:equivariant-pairing-map}
C^*(B\Gamma_m; C^*(f)^{\otimes p^m}) \otimes C^*(B\Gamma_m; C_*(f)^{\otimes p^m}) \longrightarrow C^*(B\Gamma_m).
\end{equation}
Take
\begin{equation} \label{eq:chain-quantum-sigma-3}
\xymatrix{
C^*(B\Gamma_m; C^*(f)^{\otimes p^m}) \otimes C^*(f) \ar[d]^-{\mathit{id} \otimes \eqref{eq:chain-quantum-sigma-2b}} 
\\
C^*(B\Gamma_m; C^*(f)^{\otimes p^m}) \otimes
C^*(B\Gamma_m; C_*(f)^{\otimes p^m})  \otimes C^*(f) \ar[d]^-{\eqref{eq:equivariant-pairing-map} \otimes \mathit{id}} \\ 
C^*(B\Gamma_m) \otimes C^*(f).
}
\end{equation}
Explicit formulae for \eqref{eq:chain-quantum-sigma-3} can be derived from \eqref{eq:chain-cup-product-with-coefficients}. On $C^{\mathrm{even}}(B\Gamma_m;C^*(f)^{\otimes p^m})$, one gets
%
\begin{equation} \label{eq:explicit-qst}
\begin{aligned}
&
t^i(x_1 \otimes \cdots \otimes x_{p^m}) \otimes x_0 \longmapsto  \sum_{x_\infty}
\Big( 
\sum_{d \text{ even}} \pm t^{i+d/2} \, n_A(\Delta_{m,d}, x_0,\dots,x_\infty) 
\\ & \qquad \qquad \qquad \qquad \qquad \qquad \quad + 
\sum_{d \text{ odd}} \pm t^{i+(d-1)/2}\theta \, n_A(\Delta_{m,d}, x_0,\dots,x_{\infty})
\Big) x_\infty
\end{aligned} 
\end{equation}
The formula for the other half of \eqref{eq:chain-quantum-sigma-3}, which we will not need here, generalizes \cite[Equation (4.15)]{seidel-wilkins21} (introduced there without its motivation through cup products). On cohomology, writing things topologically, we get an $H^*_{\Gamma_m}(\mathit{point})$-module map
\begin{equation} \label{eq:quantum-ma}
Q\Sigma_{m,A}: H^*_{\Gamma_m}(M^{p^m}) \otimes H^*(M)
\longrightarrow H^*_{\Gamma_m}(M \times M^{p^m})
\longrightarrow H^*_{\Gamma_m}(M),
\end{equation}
where the $\Gamma_m$-action on the right hand side is trivial. Let's add up over all $A$, with the usual $q^{\int_A c_1(M)}$ weights, to get a single map
\begin{equation} \label{eq:quantum-m}
Q\Sigma_m: H^*_{\Gamma_m}(M^{p^m}) \otimes H^*(M)
\longrightarrow H^*_{\Gamma_m}(M \times M^{p^m})
\longrightarrow H^*_{\Gamma_m}(M)[q];
\end{equation}
this uses Lemma \ref{th:gromov}(ii) to exclude negative powers of $q$. 

Suppose we have an even degree cocycle in $C^*(f)[q]$. One can insert its $p^m$-fold tensor product into the $q$-linearly extended version of \eqref{eq:chain-quantum-sigma-3} and gets a chain map $C^*(f)[q] \longrightarrow C^*(B\Gamma_m) \otimes C^*(f)[q]$. By Lemma \ref{th:diagonal}, the chain homotopy class of this map is depends only on the cohomology class of our original cocycle. As a final step, we use the no-torsion assumption from \eqref{eq:p-adic-number-ring} and K{\"u}nneth-split the right hand side. The outcome, for each $b \in H^{\mathrm{even}}(M)[q]$, is a map of degree $p^m|b|$,
\begin{equation} \label{eq:quantum-mb}
Q\Sigma_{m,b}: H^*(M)[q] \longrightarrow  H^*_{\Gamma_m}(\mathit{point}) \otimes H^*(M)[q].
\end{equation}
As already mentioned in the introduction, one usually extends that further to an $H^*_{\Gamma_m}(\mathit{point})$-linear endomorphism. We will need some basic properties of these operations. The first two are straightforward from the definition:

\begin{lemma} \label{th:q-frobenius}
$Q\Sigma_{m,qb} = q^{p^m} Q\Sigma_{m,b}$.
\end{lemma}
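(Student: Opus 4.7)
The identity is essentially formal, following from unwinding the definition of $Q\Sigma_{m,b}$ and tracking how the formal variable $q$ enters the construction. The plan is to trace back through the three levels at which $Q\Sigma_{m,b}$ was built: first the $q$-graded curve counts \eqref{eq:counting} assembled into \eqref{eq:quantum-m}, then the insertion of $b^{\otimes p^m}$, and finally the K\"unneth splitting.

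First I would recall that \eqref{eq:quantum-m} is constructed $q$-linearly: each $A$-summand \eqref{eq:quantum-ma} is weighted by $q^{\int_A c_1(M)}$ and the map itself does not involve $q$ in any other way. Hence \eqref{eq:chain-quantum-sigma-3}, extended $q$-linearly, is $R[q]$-linear in its input from $C^*(B\Gamma_m; C^*(f)^{\otimes p^m})[q]$ as well as in the output factor $C^*(f)[q]$. In particular, if one is allowed to treat $q$ as a central formal variable commuting with all tensor factors, then for any cocycle $\beta \in C^*(f)[q]^{\otimes p^m}$ one has $Q\Sigma_m(q\beta \otimes \cdot) = q\, Q\Sigma_m(\beta \otimes \cdot)$.

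Next I would apply this to $\beta = b^{\otimes p^m}$ versus $(qb)^{\otimes p^m}$. Since $q$ is central of even degree, the tensor power satisfies
\begin{equation}
(qb)^{\otimes p^m} = q^{p^m} \, b^{\otimes p^m} \qquad \text{in } C^*(f)[q]^{\otimes p^m},
\end{equation}
with no Koszul signs (all factors are even). Applying Lemma~\ref{th:diagonal} to lift $b^{\otimes p^m}$ and $(qb)^{\otimes p^m}$ to equivariant cocycles and then feeding them into \eqref{eq:chain-quantum-sigma-3}, $q^{p^m}$-linearity yields the chain-level identity
\begin{equation}
Q\Sigma_{m,qb} = q^{p^m} Q\Sigma_{m,b}
\end{equation}
before passing to cohomology and before the K\"unneth splitting. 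Because K\"unneth is also $R[q]$-linear, the identity survives as the claimed equality on $H^*(M)[q]$.

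The only substantive point to check, and the mildest obstacle, is that the equivariant-cohomology operations and the K\"unneth decomposition used to produce \eqref{eq:quantum-mb} commute with scalar multiplication by $q$; this is routine since $q$ enters only as a grading variable tracking $\int_A c_1(M)$ and is unchanged by the $\Gamma_m$-action and by all auxiliary choices. No geometric input beyond the construction itself is needed.
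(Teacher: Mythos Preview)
Your proposal is correct and is exactly what the paper has in mind: the paper dismisses this lemma as ``straightforward from the definition,'' and your argument---that $(qb)^{\otimes p^m} = q^{p^m}\, b^{\otimes p^m}$ together with $q$-linearity of the extended map \eqref{eq:chain-quantum-sigma-3} and the K\"unneth splitting---is the natural unpacking of that remark.
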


\begin{lemma} \label{th:non-equivariant}
Forgetting the equivariant structure yields a diagram
\begin{equation}
\xymatrix{
H^*(M)[q] \ar@{=}[d] \ar[rr]^-{Q\Sigma_{m,b}} &&
H^*_{\Gamma_m}(\mathit{point}) \otimes H^*(M)[q]  \ar[d]^-{\text{restrict to $H^0_{\Gamma_m}$}} \\
H^*(M)[q] \ar[rr]^-{(b^{\ast_q p^m}) \ast_q \cdot} && H^*(M)[q]
}
\end{equation}
where the bottom $\rightarrow$ is quantum multiplication with the $p^m$-fold quantum power of $b$.
\end{lemma}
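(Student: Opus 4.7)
The plan is to unwind the definition of $Q\Sigma_{m,b}$ at the cochain level and identify the $H^0_{\Gamma_m}$-component directly with quantum product by $b^{\ast_q p^m}$. By formula \eqref{eq:explicit-qst}, applied with the tensor factors $x_1 \otimes \cdots \otimes x_{p^m}$ replaced by the diagonal $b^{\otimes p^m}$, the coefficient of $t^0$ (which is what corresponds to $H^0_{\Gamma_m}(\mathit{point}) \subset H^*_{\Gamma_m}(\mathit{point})$) sends $x_0$ to $\sum_{x_\infty, A} q^{\int_A c_1(M)} n_A(\Delta_{m,0}, x_0, b, \ldots, b, x_\infty)\, x_\infty$. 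It therefore suffices to show that the operation on $H^*(M)[q]$ defined by this expression equals $b^{\ast_q p^m} \ast_q (-)$.

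The second step is to reduce to honest pseudo-holomorphic spheres. The cell $\Delta_{m,0}$ is a single point $w_0 \in S^\infty$, which is not fixed by $\Gamma_m$ (it lies in a free orbit of size $p^m$). The equivariance constraint on the inhomogeneous term $\nu$ thus imposes no condition at $w_0$, so one may take $\nu_{w_0} = 0$. With this choice, the moduli space of pairs $(w,u)$ in \eqref{eq:parametrized-maps} with $w \in \Delta_{m,0}$ reduces to the standard moduli space of $J$-holomorphic spheres $u:\bar{\bC} \to M$ of class $A$, with marked points at the fixed positions $0, \zeta_{m,1}, \ldots, \zeta_{m,p^m}, \infty$ meeting $x_0$, $b$, \ldots, $b$, $x_\infty$ respectively. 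The counts $n_A(\Delta_{m,0}, x_0, b, \ldots, b, x_\infty)$ are in this way genuine $(p^m+2)$-point Gromov--Witten numbers.

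The third step identifies this multi-point invariant with an iterated quantum product. Since the counts depend only on the deformation class of the configuration of marked points inside $\scrM_{0, p^m+2}$, one can deform the $p^m$ middle points so that they successively collide in pairs, producing bubbles on which two $b$-insertions combine into a single $b \ast_q b$ insertion on the main component. Iterating $p^m-1$ times leaves a three-point invariant $\langle x_0, b^{\ast_q p^m}, x_\infty\rangle_A$; summing over $A$ with the $q^{\int_A c_1(M)}$ weight gives exactly $b^{\ast_q p^m} \ast_q x_0$, as required.

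The main obstacle is carrying out this pairwise-collision/bubbling argument rigorously at the cochain level within the Morse-theoretic setup used here, rather than just appealing to cohomological Gromov--Witten axioms. Concretely, one needs to construct a parametrized moduli space over a cube $[0,1]^{p^m-1}$ interpolating between the fixed-position configuration of $p^m+2$ points and the iterated three-point quantum product, and analyze its boundary strata. This is essentially the standard argument underlying the associativity of the small quantum product and the factorization axiom, so the techniques needed are routine; the only subtlety is keeping track of the Koszul signs coming from the cyclic symmetry in \eqref{eq:chain-cup-product-with-coefficients}, which however disappear in the $t^0$-component since no $\theta$-terms contribute.
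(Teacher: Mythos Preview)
Your argument is correct and spells out exactly what the paper means by ``straightforward from the definition'' (no proof is given there). One minor remark: setting $\nu_{w_0}=0$ is unnecessary and could in principle raise transversality concerns---for any generic choice of $\nu_{w_0}$, the $d=0$ operation is already a chain-level model for the $(p^m+2)$-point Gromov--Witten invariant, to which your third step applies directly.
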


\begin{remark} \label{th:not-additive}
From Lemma \ref{th:non-equivariant}, it is clear that \eqref{eq:quantum-m} is not additive in $b$ for $m>1$ (not even modulo $p^m$, so multiplying with powers of $t$ won't help). 
\end{remark}

The next follows from a standard forget-marked-points dimensional argument:

\begin{lemma} \label{th:unitality}
$Q\Sigma_{m,1} = \mathit{id}$.
\end{lemma}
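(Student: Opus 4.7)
\emph{Proof plan.} The approach is a dimension count on the explicit chain-level formula \eqref{eq:explicit-qst}, exploiting the freedom granted by Lemma \ref{th:gromov}(i) to choose a convenient Morse representative of $1 \in H^0(M)$. Pick $f$ so that this representative is supported on minima; then every inner critical point $x_j$ appearing in a nonzero count $n_A(\Delta_{m,d},x_0,x_1,\dots,x_{p^m},x_\infty)$ has $|x_j|=0$, and the dimensional constraint in \eqref{eq:counting} collapses to $|x_\infty| = |x_0| + 2\int_A c_1(M) + d$.

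Next, match this against the grading of the output. Since $Q\Sigma_{m,1}$ has total degree $p^m\cdot|1|=0$, any term $t^i x_\infty q^{\int_A c_1(M)}$ contributing to $Q\Sigma_{m,1}(x_0)$ must satisfy $|x_\infty| = |x_0| - 2i - 2\int_A c_1(M)$. According to the even-$d$ line of \eqref{eq:explicit-qst}, such a $t^i$-term arises from $d=2i$; combining the two equalities yields $i + \int_A c_1(M) = 0$. Both quantities are nonnegative (the second by Lemma \ref{th:gromov}(ii)), so we must have $i=0$ and $A=0$. The parallel check for $t^i\theta$-terms (where $d = 2i+1$) gives $i + \int_A c_1(M) = -1/2$, which has no nonnegative integer solutions, so those contributions vanish on the nose.

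Consequently, $Q\Sigma_{m,1}$ is supported entirely in its non-equivariant $A=0$ piece. Lemma \ref{th:non-equivariant} identifies this piece as quantum multiplication by $1^{\ast_q p^m} = 1$, namely the identity.

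There is no substantive obstacle here: the argument is purely a dimensional ledger, which is the standard ``forget marked points'' incarnation of the unit axiom in this equivariant Morse setup --- inserting $1$ at each of the $p^m$ inner marked points imposes no constraint on the maps, leaving no slack for equivariant ($i>0$) or positive-Chern ($A\neq 0$) corrections to survive the degree balance.
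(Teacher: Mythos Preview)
Your argument is correct and is precisely the ``standard forget-marked-points dimensional argument'' the paper alludes to: you spell out the degree count showing that when all inner constraints have $|x_j|=0$, only the $d=0$, $A=0$ term survives, and then identify that term via Lemma~\ref{th:non-equivariant}. One small remark: the opening ``pick $f$ so that this representative is supported on minima'' is slightly over-stated, since any cocycle representing $1 \in H^0(M)$ in $C^*(f)$ is automatically a combination of index-$0$ critical points; no special choice of $f$ is needed.
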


The analogue of \cite[Proposition 4.8]{seidel-wilkins21}, with exactly the same proof, is the following:

\begin{lemma} \label{th:composition}
For $b_1,b_2 \in H^{\mathrm{even}}(M)[q]$, 
\begin{equation} \label{eq:circ}
Q\Sigma_{m,b_1} \circ Q\Sigma_{m,b_2} = Q\Sigma_{b_1 \ast_q b_2}.
\end{equation}
\end{lemma}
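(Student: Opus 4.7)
The plan is to run the same parametrized moduli space argument used for Proposition 4.8 of \cite{seidel-wilkins21}, but with $p^m$ marked points playing the role of $p$. The construction uses a one-parameter family whose two boundary strata recover the two sides of \eqref{eq:circ}.

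First I would introduce an enlarged version of \eqref{eq:parametrized-maps}, where a sphere $u:S \to M$ carries, in addition to the fixed marked points at $0$ and $\infty$, two distinct $\Gamma_m$-orbits of $p^m$ moving marked points: an inner orbit $z_j = \zeta_{m,j}$ constrained to the Morse cycle representing $b_2$, and an outer orbit $w_j = R\,\zeta_{m,j}$ constrained to the cycle representing $b_1$, where $R \in (0,1) \cup (1,\infty)$ is an additional real parameter. The entire setup is equivariant for the diagonal rotation action of $\Gamma_m$ on $S^\infty \times S$, so the Morse adjacency conditions, cellular decomposition \eqref{eq:cells-1}, and sign conventions from Section 3 apply verbatim. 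Counting isolated solutions cell by cell and running the same algebraic manipulations that led to \eqref{eq:chain-quantum-sigma-3} produces an equivariant chain homotopy between the two chain-level maps representing the two sides of \eqref{eq:circ}.

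Next I would analyze the two ends of the parameter interval. In the limit $R \to 1$, each inner-outer pair $(z_j, w_j)$ collides and bubbles off a small sphere attached at $\zeta_{m,j}$; this bubble carries one $b_1$ and one $b_2$ insertion, so summing over the intermediate cohomology class at the node replaces the pair by a single insertion of $b_1 \ast_q b_2$ at $\zeta_{m,j}$. The resulting boundary count is exactly $Q\Sigma_{m, b_1 \ast_q b_2}$. In the limit $R \to \infty$ (equivalently $R \to 0$), all $p^m$ outer points $w_j$ escape synchronously and bubble off a single sphere carrying all $p^m$ insertions of $b_1$, glued to the main component at a node; the main component retains its $p^m$ insertions of $b_2$, and summing over the class at the node precisely realizes $Q\Sigma_{m,b_1} \circ Q\Sigma_{m,b_2}$.

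The main obstacle is equivariant bookkeeping at the $R \to \infty$ stratum, where the $p^m$ outer insertions escape as a \emph{single} $\Gamma_m$-orbit rather than as $p^m$ separate bubbles; one must check that the cellular model of this degeneration continues to be governed by the diagonal $\Gamma_m$-action on $S^\infty$ inherited from the ambient setup, so that the gluing formula reads off $Q\Sigma_{m,b_1} \circ Q\Sigma_{m,b_2}$ and not some variant with shuffled equivariance. The evenness hypothesis on $b_1, b_2$ eliminates the Koszul sign subtleties in \eqref{eq:chain-cup-product-with-coefficients}; transversality for the parametrized family, compactness of the 1-dimensional moduli spaces, and finiteness of relevant homology classes are handled as in the proof of Lemma \ref{th:gromov}, so the rest of the argument is a verbatim adaptation of the $m=1$ case.
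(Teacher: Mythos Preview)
Your proposal is correct and follows exactly the approach the paper indicates (it simply cites \cite[Proposition 4.8]{seidel-wilkins21} and says the proof is identical). One small correction: take the parameter to be $R \in (1,\infty)$ only, not $(0,1) \cup (1,\infty)$; the limits $R \to 0$ and $R \to \infty$ are \emph{not} equivalent a priori, since they produce $Q\Sigma_{m,b_2} \circ Q\Sigma_{m,b_1}$ and $Q\Sigma_{m,b_1} \circ Q\Sigma_{m,b_2}$ respectively, and their equality is a consequence of the lemma rather than an input to its proof.
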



\begin{remark}
Another obvious question is to determine the classical ($A = 0$) contribution to $Q\Sigma_{m,b}$. In the $m = 1$ case, this classical part is the cup product with the total Steenrod power of $b$, which we write here as $\mathit{St}_1(b)$ (\cite[Lemma 4.6]{seidel-wilkins21}; see \cite{seidel23} for sign conventions). For general $m$, the same argument shows that the classical contribution is again the cup product with some, as yet undetermined, class $\mathit{St}_m(b) = b^{p^m} + O(t,\theta_m)$. In Example \ref{th:blowup} one finds that for $m>1$,
\begin{equation}
\mathit{St}_m(e) = c_m t^{p^m-1}e, \quad
R/p^{m+1} \ni c_m = (p^m-1)! = \begin{cases} 2 & (p,m) = (2,2) \\
0 & \text{$p>2$ or $m>2$.} \end{cases}
\end{equation}
\end{remark}

\subsection{Covariant constancy}
The version of \cite[Theorem 1.4]{seidel-wilkins21} in our circumstances is:

\begin{proposition} \label{th:covariantly-constant}
For any $m \geq 1$ and $b \in H^*(M)[q]$, the ($H^*_{\Gamma_m}(\mathit{point})$-linear extension of the) operation \eqref{eq:quantum-mb} satisfies
\begin{equation} \label{eq:derivative-of-qsigma}
\nabla_{tq\partial_q} \circ Q\Sigma_{m,b} - Q\Sigma_{m,b} \circ \nabla_{tq\partial_q} = 0.
\end{equation}
\end{proposition}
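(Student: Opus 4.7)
The plan is to generalize the argument of \cite[Theorem 1.4]{seidel-wilkins21} from $m=1$ to arbitrary $m$; the geometric input is essentially identical, only the equivariance group (and hence the cell decomposition of $E\Gamma_m$) changes. Writing $Q\Sigma_{m,b}(x) = \sum_A q^{\int_A c_1(M)} Q\Sigma_{m,b,A}(x)$ and unpacking \eqref{eq:quantum-connection}, the identity \eqref{eq:derivative-of-qsigma} is equivalent to
\begin{equation} \label{eq:needed}
Q\Sigma_{m,b}(c_1(M)\ast_q x) - c_1(M)\ast_q Q\Sigma_{m,b}(x) = t\sum_A {\textstyle\int_A} c_1(M)\cdot q^{\int_A c_1(M)}Q\Sigma_{m,b,A}(x),
\end{equation}
in which one recognises the shape of the $S^1$-equivariant localisation identity from Lemma \ref{th:localisation} ($\rho_0 - \rho_\infty = t\int_S$).

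To produce \eqref{eq:needed}, I would enlarge \eqref{eq:parametrized-maps} by one extra marked point $z_{\mathit{extra}} \in S$ carrying the incidence constraint $u(z_{\mathit{extra}}) \in H$, where $H \subset M$ is a pseudo-cycle Poincar\'e dual to $c_1(M)$. Running the same parametrised count as in \eqref{eq:chain-quantum-sigma-2}--\eqref{eq:chain-quantum-sigma-2b}, but now equivariantly in the free variable $z_{\mathit{extra}}$, yields for each $A$ a class valued in $H^*_{\Gamma_m}(S)$, tensored with the relevant Hom-space of operations on $H^*(M)[q]$. Applying Lemma \ref{th:localisation} to that class reduces the proof to identifying (a) its values under $\rho_0$ and $\rho_\infty$, and (b) its pushforward $\int_S$.

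These identifications come from standard degeneration analysis. As $z_{\mathit{extra}} \to 0$, the $H$-constraint is absorbed into the Morse constraint $u(0) \in W^u(x_0)$: either the marked point coalesces smoothly, contributing the classical cup product piece of $c_1(M)\ast_q$, or a sphere bubble carrying the $H$-constraint pinches off at $0$, contributing the higher $A'>0$ pieces. Summed over all sphere classes with the usual $q$-weighting, the total is precisely $Q\Sigma_{m,b}(c_1(M)\ast_q x)$. The mirror analysis at $z_{\mathit{extra}}\to\infty$ yields $c_1(M)\ast_q Q\Sigma_{m,b}(x)$. The pushforward $\int_S$ is governed by the divisor axiom: forgetting the free marked point factors out the intersection number $\int_A c_1(M)$, reproducing the right-hand side of \eqref{eq:needed}.

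The main obstacle is the bubbling/gluing analysis at $z_{\mathit{extra}} = 0,\infty$ together with careful book-keeping of signs in the cyclic $\Gamma_m$-action on $C_*(f)^{\otimes p^m}$; in particular, one must check that no new bubbling phenomena appear when the $p$-th roots of unity of the $m=1$ case are replaced by $p^m$-th roots, and that the equivariant divisor equation is compatible with the cell decomposition \eqref{eq:cells-1}--\eqref{eq:cellular-differential} and the cup product formulae \eqref{eq:chain-cup-product-with-coefficients}. Since the geometry of the underlying pseudo-holomorphic problem is unchanged from \cite[Section 4]{seidel-wilkins21}, only the combinatorics of $E\Gamma_m$ and the algebraic packaging of Section 2 are new, and I expect the Seidel--Wilkins argument to transpose essentially verbatim.
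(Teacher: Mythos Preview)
Your overall strategy matches the paper's: introduce an extra marked point constrained to lie on a divisor dual to $c_1(M)$, package the resulting operation as a class in $H^*_{\Gamma_m}(S)$, and apply the localisation identity of Lemma~\ref{th:localisation}. The identifications of $\rho_0$, $\rho_\infty$ and $\int_S$ you sketch are exactly those of Proposition~\ref{th:extra-point}.

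There is, however, a genuine gap. Your reformulation \eqref{eq:needed} is not equivalent to \eqref{eq:derivative-of-qsigma} when $b$ depends on $q$. Unpacking $\nabla_{tq\partial_q}$ and applying the Leibniz rule to $Q\Sigma_{m,b}(x) = Q\Sigma_m\big([b^{\otimes p^m}]\otimes x\big)$ produces, besides the two terms you wrote down, a third contribution
\[
Q\Sigma_m\big(tq\partial_q(b^{\otimes p^m})\otimes x\big),
\]
coming from differentiating the $q$-dependence of $b^{\otimes p^m}$ itself. In the $m=1$ case of \cite{seidel-wilkins21} this term can be ignored: $Q\Sigma_{1,b}$ is additive in $b$, so one reduces to $q$-independent $b$ and recovers the general case by linearity. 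For $m>1$ that escape route is closed (see Remark~\ref{th:not-additive}), and the extra term must be dealt with directly. The paper handles it by observing that
\[
tq\partial_q(b^{\otimes p^m}) \;=\; t\big((q\partial_q b)\otimes b\otimes\cdots\otimes b + \text{cyclic permutations}\big) \;=\; d\big(\theta\,(q\partial_q b)\otimes b\otimes\cdots\otimes b\big)
\]
in $C^*(B\Gamma_m;C^*(f)[q]^{\otimes p^m})$, so this contribution is nullhomotopic. You should add this step; without it the argument is incomplete precisely in the regime $m>1$ where the proposition is new.
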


There are two contributions to the left hand side of \eqref{eq:derivative-of-qsigma}. The first one is obtained by differentiating $b^{\otimes p^m}$ (this was unnecessary in \cite{seidel-wilkins21}, where one could start with a $q$-independent $b$ and then obtain the general case by additivity; which we cannot do for $m>1$, see Remark \ref{th:not-additive}). At the chain level, this contribution is obtained from
\begin{equation} \label{eq:derivative-of-b}
\begin{aligned}
& C^*(f) \xrightarrow{t q\partial_q(b^{\otimes p^m}) \otimes \mathit{id}}
C^*(B\Gamma_m;C^*(f)[q]^{\otimes p^m}) \otimes C^*(f)
\\ & \qquad \qquad \qquad \qquad \qquad \qquad
\xrightarrow{\eqref{eq:chain-quantum-sigma-3}}
C^*(B\Gamma_m) \otimes C^*(f)[q]
\end{aligned}
\end{equation}
Since $b$ is a cocycle in $C^*(f)[q]$, so is $q\partial_q b$. By definition of the differential \eqref{eq:v-coefficients}, one has
\begin{equation}
tq\partial_q(b^{\otimes p^m}) = t\big(q\partial_q b \otimes b \otimes \cdots \otimes b +
\text{cyclic permutations}\big) = d\, \theta(q\partial_qb \otimes b \otimes \cdots \otimes b),
\end{equation}
which shows that \eqref{eq:derivative-of-b} is nullhomotopic.

The second contribution comes from differentiating $Q\Sigma_m$ and then inserting $[b^{\otimes p^m}]$ into that derivative. This strictly follows the corresponding part of \cite{seidel-wilkins21}. By definition,
\begin{equation}
q\partial_q (Q\Sigma_m) = \sum_A ({\textstyle \int_A c_1(M)}) q^{\int_A c_1(M)} Q\Sigma_{m,A}.
\end{equation} 
One interprets this weighted sum geometrically, as coming from a modified version of our operations (this is a form of the divisor axiom for Gromov-Witten invariants). For that, we introduce an additional marked point $z_* \in S$ which can move around freely, with incidence condition
\begin{equation} \label{eq:extra-incidence}
u(z_*) \in D,
\end{equation}
where $D \subset M$ is a submanifold Poincar{\'e} dual to $c_1(M)$ . The outcome is an analogue of \eqref{eq:chain-quantum-sigma-2b}, taking the form of a chain map of degree $2-2\int_A c_1(M)$,
\begin{equation} \label{eq:pi-operation-1}
C^*(f) \longrightarrow C^*\big(B\Gamma_m;C^*(S) \otimes C_*(f)^{\otimes p^m}\big) \otimes C^*(f).
\end{equation}
Here, one uses a $\Gamma_m$-invariant cell decomposition of $S$ \cite[Section 2c]{seidel-wilkins21} to define $C_*(S)$ and its dual $C^*(S)$. This decomposition contains the fixed points $0,\infty$ as invariant $0$-cells, and also a $\Gamma_m$-invariant fundamental chain $[S] \in C_2(S)$. To define \eqref{eq:pi-operation-1}, one counts solutions of \eqref{eq:parametrized-maps} satisfying \eqref{eq:extra-incidence}, where $z_*$ is constrained to lie in one of the cells (if $z_* = 0$ that means bubbling off of an extra sphere with two marked points; and similarly for $z_* = \infty$); see \cite[Sections 4c and 5a]{seidel-wilkins21}. The properties relating this to quantum Steenrod operations are the following direct analogues of \cite[Lemma 5.2--Lemma 5.4]{seidel-wilkins21}:

\begin{proposition} \label{th:extra-point}
Up to chain homotopy, the following holds.

(i) Pairing \eqref{eq:pi-operation-1} with $[S] \in C_2(S)^{\Gamma_m}$ recovers $\int_A c_1(M)$ times \eqref{eq:chain-quantum-sigma-2b}.

(ii) Pairing \eqref{eq:pi-operation-1} with $[0] \in C_0(S)^{\Gamma_m}$ yields the sum, over all $A = A_1+A_2$, of the compositions
\begin{equation}
C^*(f) \xrightarrow{c_1(M) \ast_{A_1} \cdot} C^*(f) \xrightarrow{\text{\eqref{eq:chain-quantum-sigma-2b} for } A_2} C^*(B\Gamma_m;C_*(f)^{\otimes p^m}) \otimes C^*(f) 
\end{equation}
Here, $c_1(M) \ast_{A_1}$ stands for the chain map underlying the $A_1$-component of the quantum product.

(iii) Similarly, pairing with $[\infty] \in C_0(S)^{\Gamma_m}$ yields the sum of
\begin{equation}
\begin{aligned}
& C^*(f) \xrightarrow{\text{\eqref{eq:chain-quantum-sigma-2b} for } A_1} C^*(B\Gamma_m;C^*(f)^{\otimes p^m}) \otimes C^*(f) 
\\ & \qquad \qquad \qquad \xrightarrow{\mathit{id} \otimes (c_1(M) \ast_{A_2} \cdot)}
C^*(B\Gamma_m;C_*(f)^{\otimes p^m}) \otimes C^*(f).
\end{aligned}
\end{equation}
\end{proposition}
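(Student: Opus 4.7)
The plan is to follow the strategy of \cite[Sections 4c, 5a]{seidel-wilkins21}, which proves the $m=1$ case. The geometry is essentially the same; the only novelty is that the outputs live in the more refined coefficient ring $C^*(B\Gamma_m; \cdot)$, which (since the extra marked point $z_*$ and the submanifold $D$ are $\Gamma_m$-invariant) is automatic from the equivariance of the construction already established in \eqref{eq:parametrized-maps}. The real content lies in analyzing the moduli of maps $(w,u)$ together with an extra point $z_* \in S$ satisfying $u(z_*) \in D$, and then restricting $z_*$ to the three equivariant cells $[S]$, $[0]$, $[\infty]$.

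For (i), this is the classical divisor axiom at the chain level. When $z_*$ ranges over the whole fundamental chain $[S]$, the incidence $u(z_*) \in D$ is generically satisfied at $\#(u^{-1}(D)) = A \cdot [D] = \int_A c_1(M)$ points. Hence, as a transverse count, allowing $z_*$ to vary freely multiplies the unmarked count by $\int_A c_1(M)$. The usual argument (cf.\ \cite[Lemma 5.2]{seidel-wilkins21}) is to identify the parametrized moduli space with a branched cover of degree $\int_A c_1(M)$ over the original moduli space, the branching being immaterial at the chain level after generic perturbation.

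For (ii) and (iii), I would analyze the codimension-one ends of the one-dimensional components of the new moduli space by Gromov compactness, as in \cite[Lemmas 5.3, 5.4]{seidel-wilkins21}. When $z_*$ is constrained to the $\Gamma_m$-fixed cell $[0]$ (resp.\ $[\infty]$), it may collide with the pre-existing marked point at $0$ (resp.\ $\infty$). The resulting bubble is a sphere carrying $0$ together with $z_*$ and one node, so it contributes a pseudo-holomorphic sphere with two marked points, one mapped to $W^u(x_0)$ (resp.\ into $W^s(x_\infty)$) and one constrained to $D$; this is precisely the chain-level $A_1$-component of $c_1(M) \ast_q \cdot$. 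The complementary main component carries the remaining $p^m+1$ marked points with the full $\Gamma_m$-action, and its count is \eqref{eq:chain-quantum-sigma-2b} for $A_2$. Summing over decompositions $A = A_1 + A_2$ produces the two stated formulas, with the order of composition (apply $c_1(M) \ast_{A_1}$ before the Steenrod operation for (ii), after for (iii)) dictated by whether the bubble sits over the input end $0$ or the output end $\infty$.

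The main obstacle is administrative rather than conceptual: keeping the $\Gamma_m$-equivariance visible throughout the bubbling analysis, and pinning down the Koszul signs so that the chain-level identities are exact rather than only up to an overall sign. Equivariance is not really problematic because $0$ and $\infty$ are $\Gamma_m$-fixed, so a bubble attached there carries no extra symmetry and the main component inherits the full $\Gamma_m$-action from $S^\infty$; the cell decomposition of $S$ from \cite[Section 2c]{seidel-wilkins21} is tailored to respect this splitting. The signs are handled exactly as in \cite{seidel-wilkins21}, since the orientation of the gluing parameter and of the relevant boundary strata are inherited verbatim from the $m=1$ case (they depend only on the local model near a nodal point, not on the size of the symmetry group).
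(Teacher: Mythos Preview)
Your proposal is correct and follows the same approach as the paper, which itself gives no independent argument but simply notes that Proposition~\ref{th:extra-point} is the direct analogue of \cite[Lemmas 5.2--5.4]{seidel-wilkins21} with identical proof. One small clarification: for (ii) and (iii) the paper builds the bubbled configuration at $z_*=0,\infty$ into the very definition of \eqref{eq:pi-operation-1} (see the parenthetical remark just before the Proposition), so the chain homotopy arises from comparing perturbation data rather than from a cobordism through Gromov limits---but this is a matter of packaging, not substance.
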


The rest of the argument is algebraic manipulation. As in \eqref{eq:chain-quantum-sigma-3}, one can use cup products and pairings to rearrange \eqref{eq:pi-operation-1} into the form
\begin{equation} \label{eq:pi-operation-2}
C^*(f) \otimes C^*(B\Gamma_m; C^*(f)^{\otimes p^m}) \longrightarrow
C^*(B\Gamma_m; C^*(S)) \otimes C^*(f).
\end{equation}
On cohomology, this yields an $H^*_{\Gamma_m}(\mathit{point})$-linear map
\begin{equation} \label{eq:pi-operation-3}
H^*(M) \otimes H^*_{\Gamma_m}(M^{p^m}) \longrightarrow  H^*_{\Gamma_m}(S) \otimes H^*(M)
\end{equation}
where on the target, we use the no-torsion assumption from \eqref{eq:p-adic-number-ring} for simplicity. Proposition \ref{th:extra-point} translates into:

\begin{corollary}
(i) The composition
\begin{equation} \label{eq:integral-over-s}
H^*(M) \otimes H^*_{\Gamma_m}(M^{p^m}) \xrightarrow{\eqref{eq:pi-operation-3}} H^*_{\Gamma_m}(S) \otimes H^*(M) \xrightarrow{\int_S \otimes \mathit{id}} H^*_{\Gamma_m}(\mathit{point}) \otimes H^*(M)
\end{equation}
equals $(\int_A c_1(M)) \, Q\Sigma_{m,A}$.

(ii) The composition
\begin{equation} 
H^*(M) \otimes H^*_{\Gamma_m}(M^{p^m}) \xrightarrow{\eqref{eq:pi-operation-3}} 
H^*_{\Gamma_m}(S) \otimes H^*(M) \xrightarrow{\rho_0 \otimes \mathit{id}} H^*_{\Gamma_m}(\mathit{point}) \otimes H^*(M)
\end{equation}
equals the sum over all $A = A_1+A_2$ of
\begin{equation} \label{eq:restrict-to-0}
H^*(M) \otimes H^*_{\Gamma_m}(M^{p^m}) \xrightarrow{(c_1(M) \ast_{A_1} \cdot) \otimes \mathit{id}} H^*(M) \otimes H^*_{\Gamma_m}(M^{p^m}) \xrightarrow{Q\Sigma_{m,A_2}} H^*_{\Gamma_m}(M).
\end{equation}

(iii) The composition
\begin{equation}
H^*(M) \otimes H^*_{\Gamma_m}(M^{p^m}) \xrightarrow{\eqref{eq:pi-operation-3}} 
H^*_{\Gamma_m}(S) \otimes H^*(M) \xrightarrow{\rho_\infty \otimes \mathit{id}} H^*_{\Gamma_m}(\mathit{point}) \otimes H^*(M)
\end{equation}
equals the sum over all $A = A_1+A_2$ of
\begin{equation} \label{eq:restrict-to-infty}
\begin{aligned} &
H^*(M) \otimes H^*_{\Gamma_m}(M^{p^m}) \xrightarrow{Q\Sigma_{m,A_1}}
H^*_{\Gamma_m}(\mathit{point}) \otimes H^*(M) 
\\ & \qquad \qquad \qquad
\xrightarrow{\mathit{id} \otimes (c_1(M) \ast_{A_1} \cdot)}  H^*_{\Gamma_m}(\mathit{point}) \otimes H^*(M).
\end{aligned}
\end{equation}
\end{corollary}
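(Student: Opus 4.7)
The plan is to derive all three parts of the corollary from Proposition \ref{th:extra-point} by the same algebraic rearrangement that produced \eqref{eq:pi-operation-2} from \eqref{eq:pi-operation-1} and then \eqref{eq:pi-operation-3} from it. The map \eqref{eq:pi-operation-3} on cohomology is obtained by combining \eqref{eq:pi-operation-1} with the canonical pairing $C^*(f)^{\otimes p^m} \otimes C_*(f)^{\otimes p^m} \to R$, the equivariant cup product \eqref{eq:chain-cup-product-with-coefficients}, and K{\"u}nneth splitting (permitted by the no-torsion assumption). Because these operations are $C^*(B\Gamma_m;\cdot)$-natural in all arguments except the $C^*(S)$ factor, any map of coefficients on the $C^*(S)$ side that is applied to \eqref{eq:pi-operation-1} passes through the rearrangement and becomes a map $C^*(B\Gamma_m;C^*(S)) \to C^*(B\Gamma_m)$ composed with \eqref{eq:pi-operation-2}.

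First I would show that pairing with the three $\Gamma_m$-invariant cells $[S]\in C_2(S)$, $[0],[\infty]\in C_0(S)$ realises, on cohomology, exactly the three operations in \eqref{eq:cohomology-of-s}. The fixed points $0, \infty$ appear as invariant $0$-cells whose Poincar{\'e} duals are the chain-level cocycles representing the pullbacks $\rho_0,\rho_\infty$ from $\{0\},\{\infty\} \hookrightarrow S$; the fundamental chain $[S]\in C_2(S)^{\Gamma_m}$ represents integration (shifting degree by $2$, as in \eqref{eq:cohomology-of-s}). This is just a direct reading of the cellular presentation of $C_*(S)$ chosen in \cite[Section 2c]{seidel-wilkins21}, compatible with the equivariant Poincar{\'e} duality implicit in Lemma \ref{th:localisation}.

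Given that identification, each part of the corollary reduces to the corresponding part of Proposition \ref{th:extra-point} by naturality: pairing \eqref{eq:pi-operation-1} with $[S]$, $[0]$, $[\infty]$ on the chain level, then rearranging via \eqref{eq:chain-cup-product-with-coefficients} and the Morse pairing, produces the compositions \eqref{eq:integral-over-s}, the $\rho_0$-composition, and the $\rho_\infty$-composition respectively. For (i) the chain-level identity of Proposition \ref{th:extra-point}(i) immediately yields $(\int_A c_1(M))\,Q\Sigma_{m,A}$. For (ii) and (iii), the summands $c_1(M) \ast_{A_1}$ appear purely in the non-equivariant $C^*(f)$ factor, and thus commute with the rearrangement unchanged; the sum over $A = A_1+A_2$ survives because $q$-weights recombine correctly under Gromov-compactness (Lemma \ref{th:gromov}(iii)).

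The only nontrivial step is the sign and convention bookkeeping in the second paragraph: confirming that after equivariant cup product, Poincar{\'e} pairing on the Morse side, and the chosen cellular orientations on $S$, the three cellular pairings realise $\int_S$, $\rho_0$, $\rho_\infty$ literally and not off by a sign or a power of $t$ coming from \eqref{eq:chain-cup-product-with-coefficients}. This is a diagram chase rather than a geometric argument, and once carried out it is uniform across the three cases; I expect it to be the only part requiring genuine care, all the rest being formal naturality of the algebraic rearrangement already used to pass from \eqref{eq:chain-quantum-sigma-2b} to \eqref{eq:quantum-m}.
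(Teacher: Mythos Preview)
Your proposal is correct and matches the paper's approach: the paper gives no separate proof, presenting the Corollary as the direct cohomological translation of Proposition~\ref{th:extra-point} via the same algebraic rearrangement \eqref{eq:pi-operation-1} $\to$ \eqref{eq:pi-operation-2} $\to$ \eqref{eq:pi-operation-3} that you describe. Your identification of the chain-level pairings with $[S]$, $[0]$, $[\infty]$ as realising $\int_S$, $\rho_0$, $\rho_\infty$, and the observation that the rearrangement is natural in the $C^*(S)$ factor, are exactly what is implicit in the paper's one-line ``translates into''.
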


Applying Lemma \ref{th:localisation}, and adding over all homology classes $A$, yields
\begin{equation}
tq\partial_q(Q\Sigma_m) + (\mathit{id} \otimes c_1(M) \ast_q \cdot) \circ Q\Sigma_m - Q\Sigma_m \circ
((c_1(M) \ast_q \cdot) \otimes \mathit{id}) = 0.
\end{equation}
After inserting the class $[b^{\otimes p^m}] \in H^*_{\Gamma_m}(M^{p^m})$, this is just the relation needed to complete the proof of Proposition \ref{th:covariantly-constant}.

\subsection{Increasing $m$}
The missing piece is the following statement, which generalizes \eqref{eq:gamma12}:

\begin{proposition} \label{th:reducing-diagram}
The operations \eqref{eq:quantum-m} fit into a commutative diagram
\begin{equation} \label{eq:reducing-diagram}
\xymatrix{
H^*(M) \ar[rr]^-{Q\Sigma_{m,b^{\ast_q p}}} && 
H^*_{\Gamma_m}(M)[q] 
\\
H^*(M) \ar[rr]^-{Q\Sigma_{m+1,b}} \ar@{=}[u] && 
H^*_{\Gamma_{m+1}}(M)[q] \ar[u]_-{\text{\rm restrict to $\Gamma_m \subset \Gamma_{m+1}$}}
}
\end{equation}
\end{proposition}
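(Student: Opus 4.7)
The plan is to deform the $\Gamma_{m+1}$-equivariant parametrized moduli spaces underlying $Q\Sigma_{m+1,b}$ through a one-parameter family that is only $\Gamma_m$-equivariant, ending in a nodal configuration where the $p^{m+1}$ marked points have coalesced into $p^m$ clusters of $p$, each cluster bubbling off a sphere. At the undegenerate end, the resulting counts recover $Q\Sigma_{m+1,b}$ pulled back along the restriction $\Gamma_m \hookrightarrow \Gamma_{m+1}$; at the degenerate end, each bubble contributes the $p$-fold quantum power $b^{\ast_q p}$ at the nodal point, and the main sphere with its $p^m$ marked points becomes the configuration for $Q\Sigma_{m,b^{\ast_q p}}$. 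The parametrized cobordism then yields a chain homotopy, hence the identity on cohomology.

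Before the geometry, I would verify the algebraic/combinatorial side. Under the inclusion $\Gamma_m = \langle \sigma_{m+1}^p\rangle \subset \Gamma_{m+1}$, comparing \eqref{eq:cells-2} for $m$ and $m+1$ shows that the $\Gamma_m$-cell $\Delta_{m,d} \subset S^\infty$ coincides, for $d$ even, with the single $\Gamma_{m+1}$-cell $\Delta_{m+1,d}$, and for $d$ odd, with the union of the $p$ consecutive cells $\Delta_{m+1,d}, \sigma_{m+1}\Delta_{m+1,d}, \dots, \sigma_{m+1}^{p-1}\Delta_{m+1,d}$. This is exactly the map of resolutions \eqref{eq:map-of-resolutions}, and pulling the chain-level operation \eqref{eq:explicit-qst} for $m+1$ back along this refinement of cells recovers the composition with restriction on the right side of \eqref{eq:reducing-diagram}, by the very definition of the restriction map \eqref{eq:induced-map-with-coefficients}.

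Next, I would introduce a parameter $r \in [0,1]$ and a $\Gamma_m$-equivariant family of marked-point configurations on $S$ that equals the standard $\Gamma_{m+1}$-symmetric configuration $\{\zeta_{m+1,j}\}$ at $r = 1$, and at $r = 0$ degenerates by collapsing each cluster $C_k = \{\zeta_{m+1,(k-1)p+1},\dots,\zeta_{m+1,kp}\}$ to a single attaching node $\zeta_{m,k}$, with a bubble sphere carrying the $p$ marked points. Throughout the family one imposes the same incidence constraints as in \eqref{eq:parametrized-maps}, parametrizing the moduli space over $[0,1] \times (\Delta_{m,d} \setminus \partial \Delta_{m,d})$, with boundary contributions collected at $r = 0, 1$. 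Standard Gromov compactness and gluing then identify: (i) the $r = 1$ end with \eqref{eq:chain-quantum-sigma-2b} for $m+1$, cell-refined as above; (ii) the $r = 0$ end with \eqref{eq:chain-quantum-sigma-2b} for $m$ applied to $(b^{\ast_q p})^{\otimes p^m}$, where each bubble contributes the $p$-fold quantum product at its node, the energy splitting $A = A_{\mathrm{main}} + A_1 + \cdots + A_{p^m}$ being weighted by the total $q^{\int_A c_1(M)}$.

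The hard part will be the $r = 0$ bubble analysis. Concretely, one must: (a) argue that each bubble contribution depends only on $[b]$ and equals $b^{\ast_q p}$ at the node, despite the $p$ marked points sitting at fixed (cluster-rescaled) positions; this uses a parameter-independence argument in the same spirit as Lemma \ref{th:diagonal}, together with the standard identification of stable maps with fixed marked points and identical constraints as the quantum $p$-fold product; (b) match the cyclic block permutation on bubbles/clusters induced by $\sigma_{m+1}^p$ with the Koszul signs in the cup-product formula \eqref{eq:chain-cup-product-with-coefficients} on $C_*(f)^{\otimes p^m}$, where each factor is itself a $p$-fold tensor; (c) verify transversality and gluing for the boundary strata of the parametrized moduli space. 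With these in hand, the resulting chain homotopy descends to the required commutativity of \eqref{eq:reducing-diagram} on cohomology.
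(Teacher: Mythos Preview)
Your proposal is correct and follows essentially the same route as the paper: restrict to $\Gamma_m$ via the cell refinement $\Delta_{m,d} = \Delta_{m+1,d}$ (even $d$) respectively $\bigcup_{l=0}^{p-1}\sigma_{m+1}^l\Delta_{m+1,d}$ (odd $d$), then use the resulting freedom to collide the $p^{m+1}$ marked points in $\Gamma_m$-symmetric clusters of $p$, and read off the $p$-fold quantum product on each bubble. The one technical point the paper adds, which you have folded into ``standard Gromov compactness and gluing'', is that after reaching the nodal configuration it inserts a further parameter: the matching conditions $u(\zeta_{m,k})=\tilde u_k(\infty)$ are replaced by $u(\zeta_{m,k})=\phi^r(\tilde u_k(\infty))$ for the gradient flow $\phi^r$, $r\in[0,\infty)$; sending $r\to\infty$ decouples the bubble moduli problems completely from the main sphere and turns the node constraints into honest $W^s/W^u$ incidence conditions, so that the bubble count is literally the chain-level $p$-fold quantum product rather than a fiber-product count requiring a separate gluing identification.
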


Consider
\begin{equation} \label{eq:operation-with-reduced-m}
C^*(f) \xrightarrow{\eqref{eq:chain-quantum-sigma-2b}} C^*\big(B\Gamma_{m+1};C_*(f)^{\otimes p^{m+1}}\big) \otimes C^*(f)
\xrightarrow{\eqref{eq:induced-map-with-coefficients}} C^*(B\Gamma_m;C^*(f)^{\otimes p^{m+1}}) \otimes C^*(f).
\end{equation}
Our cell decompositions have the property that
\begin{equation}
\Delta_{m,d} = \begin{cases} \Delta_{m+1,d} & \text{if $d$ is even,} \\
\Delta_{m+1,d} \cup \sigma_{m+1}(\Delta_{m+1,d}) \cup \cdots
\cup \sigma_{m+1}^{p-1}(\Delta_{m+1,d}) & \text{if $d$ is odd.}
\end{cases}
\end{equation}
In the second case, the only overlaps happen at the boundary. Hence, applying \eqref{eq:induced-map-with-coefficients} amounts to considering operations parametrized by $\Delta_{m,d}$ rather than $\Delta_{m,d+1}$. In particular, the composition \eqref{eq:operation-with-reduced-m} is the operation associated to the sphere $S$ with $p^{m+1}+2$ marked points, but using only equivariance with respect to the subgroup $\Gamma_m$. This gives us additional freedom: we can deform the construction by replacing the $\zeta_{m+1,j}$ in \eqref{eq:parametrized-maps} with any collection of pairwise distinct points
\begin{equation} \label{eq:zeta-symmetry}
\zeta_{m+1,1},\dots,\zeta_{m+1,p^{m+1}} \in S \setminus \{0,\infty\}, \;\; e^{2\pi i/p^m}\zeta_{m+1,j} = 
\begin{cases} \zeta_{m+1,j+p} & j+p \leq p^{m+1}, \\ \zeta_{m+1,j+p-p^{m+1}} & \text{otherwise.} \end{cases}
\end{equation}
Specifically, what we want to do is to move the points $(\zeta_{m+1,1},\zeta_{m+1,2},\dots,\zeta_{m+1,p})$ so that they come together at $\zeta_{m,1}$ at roughly the same rate; by \eqref{eq:zeta-symmetry}, this means that all the $p^{m+1}$ points come together in groups of $p$ each. In the limit as the points collide and bubble off, one gets (excluding further bubbling and breaking, which are positive codimension phenomena) pseudo-holomorphic map configurations
\begin{equation} \label{eq:configurations}
\left\{
\begin{aligned}
& u: S \longrightarrow M, \\
& \tilde{u}_1,\dots,\tilde{u}_{p^m}: \tilde{S} \longrightarrow M, \\
& u(0) \in W^u(x_0), \; u(\infty) \in W^s(x_\infty), \\
& u(\zeta_{m,j}) = \tilde{u}_j(\infty) \text{ for $j = 1,\dots,p^m$}, \\
& \tilde{u}_1(\tilde{\zeta}_1) \in W^u(x_1), \dots, \tilde{u}(\tilde{\zeta}_p) \in W^u(x_p), \\
& \dots, \\
& \tilde{u}_{p^m}(\tilde{\zeta}_1) \in W^u(x_{p^{m+1}-p^m+1}), \dots,
\tilde{u}_{p^m}(\tilde{\zeta}_p) \in W^u(x_{p^{m+1}}).
\end{aligned}
\right.
\end{equation}
Here, $\tilde{S}$ is a sphere, which carries marked points $\tilde{\zeta}_1,\dots,\tilde{\zeta}_p \neq \infty$. Those marked points are in fixed position (which position one gets depends on how one moves the original points together). Of course, the entire situation is still parametrized by $w \in S^\infty$ and subject to $\Gamma_m$-equivariance constraints; but one can achieve, without affecting transversality, that the inhomogeneous term on $\tilde{S}$ is independent of $w$. 

Concretely, the outcome of this degeneration is a chain homotopy relating \eqref{eq:operation-with-reduced-m} to a version defined using \eqref{eq:configurations}. At this point, we adopt a standard technique, which is to modify the adjacency condition in the fourth line of \eqref{eq:configurations} by inserting a finite length gradient flow line between the two points. In other words, we define another parametrized moduli space depending on $r \in [0,\infty)$, where the condition is
\begin{equation}
u(\zeta_{m,j}) = \phi^r(\tilde{u}_j(\infty)),
\end{equation}
$\phi^r$ being the gradient flow of $f$. In the limit $r \rightarrow \infty$, the equation on the $\tilde{S}$-components completely decouples from that on $S$ (having its own $W^s$ and $W^u$ intersection conditions). Since that equation is set up using marked points $\tilde{\zeta}_1,\dots,\tilde{\zeta}_p,\infty$ in fixed position, it defines the $p$-fold quantum product. What the entire geometric construction yields is a chain homotopy commutative diagram
\begin{equation} \label{eq:reduction-diagram}
\xymatrix{
C^*(f) \ar[r]^-{\eqref{eq:chain-quantum-sigma-2b}} &
C^*(B\Gamma_m; C_*(f)^{\otimes p^m}) \otimes C^*(f) \ar[r]
& 
C^*(B\Gamma_m; C_*(f)^{\otimes p^{m+1}}) \otimes C^*(f)
\\
C^*(f) \ar[rr]^-{\eqref{eq:chain-quantum-sigma-2b}} \ar@{=}[u] &&
 C^*(B\Gamma_{m+1};C_*(f)^{\otimes p^{m+1}}) \otimes C^*(f)
 \ar[u]_-{\eqref{eq:induced-map-with-coefficients}}
}
\end{equation}
Here, the unlabeled map is the tensor product of $p^m$ copies of the map $C_*(f) \rightarrow C_*(f)^{\otimes p}$ dual to the $p$-fold quantum product. We have omitted considerations of homology classes of pseudo-holomorphic curves, which would have complicated the diagram somewhat, but they are easy to see: if the bottom line of \eqref{eq:reduction-diagram} counts contributions in class $A$, then the top line should really be a sum, where the second $\rightarrow$ runs over classes $A_1,\dots,A_{p^m}$, and the first $\rightarrow$ is the contribution of $A - A_1 - \cdots - A_{p^m}$. From here, pairing with $b^{\otimes p^{m+1}}$ immediately leads to \eqref{eq:reducing-diagram}.

\subsection{Proof of Theorem \ref{th:p-adic}}
For $b \in H^{\mathrm{even}}(M)[q^{\pm 1}]$ define a map
\begin{equation} \label{eq:quantum-m-endo-2}
Q\Sigma_{m,b}: H^*(M)[q^{\pm 1}] \longrightarrow H^*_{\Gamma_m}(\mathit{point}) \otimes H^*(M)[q^{\pm 1}].
\end{equation}
If $b$ has no negative powers of $q$, this is obtained from \eqref{eq:quantum-mb} by extending coefficients; and one generalizes it by setting $Q\Sigma_{m,q^{-1}b} = q^{-p^m} Q\Sigma_{m,b}$, compatibly with Lemma \ref{th:q-frobenius}. Proposition \ref{th:covariantly-constant} remains true, because $tq\partial_q (q^{-p^m}) = -p^m \,tq^{-p^m} = 0 \in H^*_{\Gamma_m}(\mathit{point})[q^{\pm 1}]$.

Specialize to the case where $b = b^{\ast_q p}$ is (a degree $0$ class and) equals its $p$-th power. Then, Proposition \ref{th:reducing-diagram} relates $Q\Sigma_{m+1,b}$ and $Q\Sigma_{m,b}$, allowing us to define an operation $Q\Sigma_{\infty,b}$, which fits into a diagram
\begin{equation} \label{eq:reducing-diagram-2}
\xymatrix{
H^*(M)[q^{\pm 1}] \ar[rr]^-{Q\Sigma_{m,b}} && 
H^*_{\Gamma_m}(\mathit{point}) \otimes H^*(M)[q^{\pm 1}]
\\
H^*(M)[q^{\pm 1}] \ar[rr]^-{Q\Sigma_{\infty,b}} \ar@{=}[u] && \ar[u] 
H^*(M)[q^{\pm 1}][[t]]
}
\end{equation}
Here we have considered each power of $t$ separately, taken the inverse limit (compare Lemma \ref{th:gamma-infinity}), and then assembled those into $H^*(M)[q^{\pm 1}][[t]]$, which is the completion of $H^*_{\Gamma_\infty}(\mathit{point}) \otimes H^*(M)[q^{\pm 1}]$. This is somewhat coarser than what is actually the case:

\begin{lemma}
The operation $Q\Sigma_{\infty,b}$ takes values in $H^*(M)[q^{\pm 1}]\lla t \rra$.
\end{lemma}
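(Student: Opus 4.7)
The plan is to convert the claim into a $t$-degree bound on each mod-$p^m$ reduction of $Q\Sigma_{\infty,b}(x)$, and then derive that bound from the positivity of $c_1(M)$ on pseudo-holomorphic spheres (Lemma \ref{th:gromov}(ii)) together with elementary grading bookkeeping.

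First I would observe that, by construction of $Q\Sigma_{\infty,b}$ as an inverse limit (diagram \eqref{eq:reducing-diagram-2} and Lemma \ref{th:gamma-infinity}), the reduction of $Q\Sigma_{\infty,b}(x)$ modulo $p^m$ equals $Q\Sigma_{m,b}(x)$, an element of $H^*_{\Gamma_m}(\mathit{point})\otimes H^*(M)[q^{\pm 1}]$ which is a finite sum, hence polynomial in $t$. Thus it suffices to exhibit constants $\alpha,\beta$, independent of $m$, such that the $t$-degree of $Q\Sigma_{m,b}(x)$ is at most $\alpha p^m + \beta$ for every $m$.

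Next I would bound the $q$-pole of $Q\Sigma_{m,b}(x)$ by writing $b = q^{-\alpha}b'$ with $\alpha$ the $q$-pole order of $b$ and $b'\in H^*(M)[q]$. The extension convention recalled just above the lemma gives $Q\Sigma_{m,b} = q^{-\alpha p^m} Q\Sigma_{m,b'}$, and Lemma \ref{th:gromov}(ii) implies that $Q\Sigma_{m,b'}$ carries $H^*(M)$ into $H^*_{\Gamma_m}(\mathit{point})\otimes H^*(M)[q]$, with no negative powers of $q$. Using $R[q^{\pm 1}]$-linearity to reduce to the case $x \in H^*(M)$, one concludes that $Q\Sigma_{m,b}(x)$ has $q$-pole order at most $\alpha p^m$.

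The final step is a pure grading count. A $t^k q^j y$-summand of $Q\Sigma_{m,b}(x)$ must satisfy $|y|=|x|-2k-2j \in [0,2\dim_{\bC}(M)]$, hence $k\leq |x|/2-j$; combined with $j\geq -\alpha p^m$ this yields $k\leq \alpha p^m + |x|/2 \leq \alpha p^m + \dim_{\bC}(M)$, and an arbitrary $x\in H^*(M)[q^{\pm 1}]$ satisfies the same bound by $q$-linearity. This is exactly the log-decay condition of Definition \ref{th:log-decay} with the constants announced in the remark following Theorem \ref{th:p-adic}. The only nontrivial ingredient is the positivity input Lemma \ref{th:gromov}(ii); once that is in hand, the rest is purely algebraic bookkeeping, and I do not anticipate any real obstacle.
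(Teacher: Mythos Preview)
Your proposal is correct and follows essentially the same argument as the paper: bound the $q$-pole of $Q\Sigma_{m,b}$ by $\alpha p^m$ via the extension rule $Q\Sigma_{m,q^{-1}b}=q^{-p^m}Q\Sigma_{m,b}$ together with positivity (Lemma \ref{th:gromov}(ii)), then convert this into a $t$-degree bound using that $Q\Sigma_{m,b}$ has degree $0$, and finally identify this with the mod $p^m$ reduction of $Q\Sigma_{\infty,b}$. Your write-up is simply more explicit about the references and the grading arithmetic; the paper's proof compresses the same three steps into three sentences.
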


\begin{proof}
Suppose that the nontrivial terms in $b$ come with powers $q^{-\alpha}$ and higher. Then, by definition, $Q\Sigma_{m,b}$ contains only powers $q^{-p^m\alpha}$ or higher. Since $Q\Sigma_{m,b}$ is of degree $0$, this means that any term with a power of $t$ higher than $p^m\alpha + \beta$, $\beta = \mathrm{dim}_{\bC}(M)$, has to be zero. By construction, the same is true for the mod $p^m$ reduction of $Q\Sigma_{\infty,b}$.
\end{proof}

Now suppose we have a collection \eqref{eq:idempotents}. Extend $Q\Sigma_{\infty,e_i}$ to endomorphisms of $H^*(M)[q^{\pm 1}]\lla t \rra$, which are covariantly constant by Proposition \ref{th:covariantly-constant}. From Lemmas \ref{th:unitality} and \ref{th:composition}, it follows that
\begin{equation}
Q\Sigma_{\infty,e_i} \circ Q\Sigma_{\infty,e_j} = \begin{cases} Q\Sigma_{\infty,e_i} & i = j, \\
0 & \text{$i \neq j$.}
\end{cases}
\end{equation}
From Lemma \ref{th:non-equivariant} and \eqref{eq:reducing-diagram-2}, the $t = 0$ reduction of $Q\Sigma_{\infty,e_i}$ is the quantum product with $e_i$.

\appendix
\section{Elementary considerations\label{sec:appendix}}
We collect here some properties of formal power series connections, which are relevant for the discussion in Section \ref{sec:overview}. 
The general context is that of an integral domain $R$, but we will impose additional conditions as needed. We consider connections
\begin{equation} \label{eq:pole}
\nabla_{\tau^2\partial_\tau} = \tau^2\partial_\tau + A^0 + A^1\tau + \cdots
\quad A^m \in \mathit{Mat}_{r \times r}(R).
\end{equation}

\begin{lemma} \label{th:splitting-lemma}
Assume that the leading term $A^0$ of our connection satisfies \eqref{eq:differences-are-invertible}. 
Then there is a unique splitting of $R^r[[\tau]]$, compatible with the connection and which, for $\tau = 0$, reduces to the generalized eigenspace decomposition for $A^0$. Moreover, any covariantly constant endomorphism preserves this decomposition.
\end{lemma}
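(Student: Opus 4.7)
The strategy is to reduce the statement to a formal block-diagonalization of the connection with respect to the generalized eigenspace decomposition of $A^0$, and then to argue both uniqueness and the endomorphism assertion in the gauge-diagonalized frame.

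The first preliminary step is to establish the generalized eigenspace decomposition $R^r = \bigoplus_\mu V_\mu$ over $R$ itself. Since the eigenvalues $\lambda_1,\dots,\lambda_s$ of $A^0$ lie in $R$, and pairwise differences are invertible, the factors $(t-\lambda_i)^{m_i}$ of the characteristic polynomial are pairwise coprime in $R[t]$. A Chinese remainder / Lagrange interpolation argument then produces mutually orthogonal idempotents $E^0_{\lambda_i}$ that are polynomials in $A^0$, summing to the identity. On each off-diagonal block $\mathrm{Hom}(V_\mu,V_\nu)$ with $\mu \neq \nu$, the operator $\mathrm{ad}_{A^0}(X) = A^0 X - X A^0$ equals $(\nu-\mu)\,\mathit{id}$ plus a nilpotent operator (coming from the nilpotent parts of $A^0|_{V_\mu}$, $A^0|_{V_\nu}$), and is therefore invertible by geometric series.

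Next, I would construct a gauge transformation $g = I + g^1\tau + g^2\tau^2 + \cdots$, with each $g^k$ strictly off-diagonal, such that $g^{-1} \nabla_{\tau^2\partial_\tau} g = \tau^2\partial_\tau + D(\tau)$ with $D(\tau)$ block-diagonal. Writing the equation $\tau^2 \partial_\tau g + A g = g D$ and matching coefficients of $\tau^k$ produces, at each order, a pair of equations: the off-diagonal part determines $g^k$ via the (invertible) operator $\mathrm{ad}_{A^0}$ on off-diagonal blocks, while the diagonal part defines $D^k$. Both are uniquely solvable over $R$, so $g$ and $D$ exist. In this gauge the block projections $E^0_\mu$ commute with $D(\tau)$, hence are covariantly constant; transforming back by $g$ gives covariantly constant idempotents $E_\mu = g E^0_\mu g^{-1} \in \mathit{Mat}_{r\times r}(R[[\tau]])$ reducing to $E^0_\mu$ at $\tau = 0$. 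This yields existence of the splitting.

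For uniqueness, I would work in the gauged frame. Suppose $F(\tau)$ is any covariantly constant element of $\mathit{Mat}_{r\times r}(R[[\tau]])$ (not yet assumed idempotent). The relation $\tau^2\partial_\tau F + [D,F] = 0$ restricted to an off-diagonal block $F_{\mu\nu}$ with $\mu\neq\nu$ reads
\[
\tau^2\partial_\tau F_{\mu\nu} + D_{\mu\mu}(\tau) F_{\mu\nu} - F_{\mu\nu} D_{\nu\nu}(\tau) = 0.
\]
At $\tau = 0$ the leading term is $(\mu-\nu) F^0_{\mu\nu}$ plus a nilpotent perturbation; iterating the relation $F^0_{\mu\nu} = (\mu-\nu)^{-1}(F^0_{\mu\nu} N_\nu - N_\mu F^0_{\mu\nu})$ forces $F^0_{\mu\nu} = 0$, and the same inductive argument at each order $\tau^k$ (combined with the induction hypothesis that $F^{<k}_{\mu\nu}=0$) forces $F^k_{\mu\nu} = 0$. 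Hence every covariantly constant $F$ is block-diagonal, which already proves that the splitting is preserved by covariantly constant endomorphisms. When $F$ is additionally idempotent with $F(0) = E^0_\lambda$, the block $F|_{V_\lambda}$ is an idempotent in $\mathit{Mat}(R[[\tau]])$ reducing to the identity at $\tau = 0$; writing $I - F|_{V_\lambda} = \tau G$ and iterating the idempotent relation $(I-F|_{V_\lambda})^2 = I-F|_{V_\lambda}$ shows $G$ is $\tau$-divisible to all orders, hence zero. The other diagonal blocks vanish by the same argument, so $F = E_\lambda$. Transporting this back through $g$ gives uniqueness of the splitting.

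The main obstacle is the inductive off-diagonal vanishing in the last paragraph: the concern is that the nilpotent parts of $A^0|_{V_\mu}$ (and the higher-order terms $D^k$ with $k\geq 1$) might interfere with the simple $(\mu-\nu)$ invertibility. What makes the induction go through is precisely that, at each order, the terms one has to solve against $\mathrm{ad}_{A^0}$ either vanish by induction or appear only through the off-diagonal blocks of previously determined quantities, so the recursion closes cleanly over $R$.
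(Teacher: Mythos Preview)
Your argument is correct and follows the classical route. The paper does not give its own proof of this lemma; it simply cites Wasow \cite[Chapter IV, Theorem 11.1]{wasow65} together with the expositions \cite[Section 6.1]{hugtenburg24} and \cite[Corollary 2.4]{chen24c}. What you have written is precisely the standard formal block-diagonalization argument one finds in those references: build the gauge $g = I + g^1\tau + \cdots$ order by order using invertibility of $\mathrm{ad}_{A^0}$ on off-diagonal blocks, then argue in the gauged frame. Your treatment of the integral-domain aspects (getting the idempotents $E^0_\mu$ as polynomials in $A^0$ via coprimality of the factors $(t-\lambda_i)^{m_i}$, and inverting $(\nu-\mu)\,\mathit{id} + \text{nilpotent}$ by a finite geometric series) is exactly the care needed to make the classical field-coefficient argument work over $R$, which is the point of the reference to \cite{chen24c}.

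Two minor remarks on presentation. First, in the inductive off-diagonal vanishing, the cleanest way to phrase it is: once $F^0,\dots,F^{k-1}$ are block-diagonal, every term in the $\tau^k$-equation except $[D^0,F^k]$ is block-diagonal (being built from commutators of block-diagonal matrices), so the off-diagonal part of the equation reads $[A^0,F^k]_{\mathrm{off}}=0$, forcing $F^k_{\mu\nu}=0$. Your version says the same thing but the phrasing ``appear only through the off-diagonal blocks of previously determined quantities'' slightly obscures that the mechanism is simply that all the lower-order contributions are diagonal. Second, your idempotent trick for uniqueness (that an idempotent $P$ in $\mathit{Mat}(R[[\tau]])$ with $P(0)=0$ must vanish, by iterating $P = \tau Q \Rightarrow \tau Q^2 = Q$) is a nice touch: it sidesteps having to analyse covariantly constant endomorphisms of the individual diagonal blocks $\tau^2\partial_\tau + D_{\mu\mu}$, which need not be determined by their constant term.
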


A classical reference is \cite[Chapter IV, Theorem 11.1]{wasow65}. For expositions, see \cite[Section 6.1]{hugtenburg24} or (in our context) \cite[Corollary 2.4]{chen24c}.

\begin{lemma} \label{th:non-resonant}
Take $R$ to be a field of characteristic $0$. Take a connection \eqref{eq:pole} with a simple pole, meaning $A^0 = 0$. In addition we assume that $A^1$ is non-resonant (no two eigenvalues differ by a nonzero integer).

(i) Any covariantly constant endomorphism of the connection over $R((\tau))$ is in fact defined over $R[[\tau]]$.

(ii) The constant term $E^0$ of such an endomorphism satisfies $[A^1,E^0] = 0$.

(iii) Conversely, given any $E^0$ with $[A^1,E^0] = 0$, it can be uniquely extended to an endomorphism of the connection.
\end{lemma}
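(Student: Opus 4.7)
My plan is to translate the covariant-constancy condition on $E=\sum_k E^k\tau^k$ into a coefficient-by-coefficient recurrence in $\tau$, and then use the non-resonance hypothesis as a purely linear-algebraic fact to solve or obstruct that recurrence. Writing $A(\tau)=A^1\tau+A^2\tau^2+\cdots$, the condition that $E$ commute with $\nabla_{\tau^2\partial_\tau}=\tau^2\partial_\tau+A(\tau)$ as an operator on $R^r((\tau))$ is $\tau^2\partial_\tau E=[E,A]$. Comparing coefficients of $\tau^n$ and reindexing $m=n-1$ yields, for $m\geq 0$,
\begin{equation*}
(m\cdot\mathit{id}+\mathrm{ad}_{A^1})(E^m)=\sum_{k=0}^{m-1}[E^k,\,A^{m+1-k}],
\end{equation*}
together with analogous identities at negative $m$ that I will use for part (i).

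The one nontrivial input — and the main obstacle to keep track of — is the following linear-algebra observation: over an algebraic closure of $R$, the operator $\mathrm{ad}_{A^1}$ acts on $\mathit{Mat}_{r\times r}$ with eigenvalues $\lambda_i-\lambda_j$, where $\lambda_1,\dots,\lambda_r$ are the eigenvalues of $A^1$. Non-resonance says that no such difference is a nonzero integer, so for every nonzero integer $m$ the operator $m\cdot\mathit{id}+\mathrm{ad}_{A^1}$ is invertible on $\mathit{Mat}_{r\times r}(R)$.

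Given this, parts (ii) and (iii) fall out of the recurrence. The $m=0$ instance reads $\mathrm{ad}_{A^1}(E^0)=0$, i.e.\ $[A^1,E^0]=0$, which is (ii). For $m\geq 1$ the invertibility of $m\cdot\mathit{id}+\mathrm{ad}_{A^1}$ means that $E^m$ is uniquely determined by $E^0,\dots,E^{m-1}$; starting from an arbitrary $E^0$ in the centralizer of $A^1$, one solves the recurrence to produce a unique covariantly constant extension over $R[[\tau]]$, giving (iii).

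For (i), suppose $E\in\mathit{End}(R^r)((\tau))$ is covariantly constant but has a nontrivial pole, and let $-N$ with $N\geq 1$ be its lowest order, so $E^{-N}\neq 0$. Reading off the coefficient of $\tau^{-N+1}$ in $\tau^2\partial_\tau E=[E,A]$ gives $-N\,E^{-N}=[E^{-N},A^1]$, i.e.\ $(\mathrm{ad}_{A^1}-N\cdot\mathit{id})(E^{-N})=0$. The same invertibility observation, now applied with $m=-N$, forces $E^{-N}=0$, contradicting the choice of $N$. Thus $E$ has no poles, proving (i). Everything beyond the eigenvalue-spectrum remark for $\mathrm{ad}_{A^1}$ is a routine unraveling of definitions.
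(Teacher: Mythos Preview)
Your proof is correct and is precisely the ``straightforward order-by-order computation'' that the paper alludes to in lieu of a proof. The recurrence you derive, the use of the spectrum of $\mathrm{ad}_{A^1}$ to invert $m\cdot\mathit{id}+\mathrm{ad}_{A^1}$ for nonzero integers $m$, and the lowest-order contradiction for part~(i) are exactly what is intended.
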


This is a straightforward order-by-order computation. The following two examples show that the non-resonance condition is necessary both for existence and uniqueness; with a view to our application, we focus on idempotent endomorphisms.
%
%
%
%

\begin{example} \label{th:non-existence}
Take
\begin{equation}
\tau^2\partial_\tau + \begin{pmatrix} -\tau & \tau^2 \\ 0 & 0 \end{pmatrix}.
\end{equation}
Then, $E^0 = \mathrm{diag}(1,0)$ commutes with $A^1 = \mathrm{diag}(-1,0)$, but does not admit a covariantly constant extension. This can be checked by hand; more conceptually, it is a consequence of the fact that our connection has non-semisimple monodromy.
\end{example}

\begin{example} \label{th:non-uniqueness}
Take $
\tau^2\partial_\tau + \mathrm{diag}(0,0,0,-\tau)$
(this is the trivial connection after a monomial base-change). Then, 
\begin{equation}
\textstyle \mathrm{diag}(1,1,0,0) \;\; \text{ and } \;\; \left(\begin{smallmatrix} 1 &&& \\ & 1 && \\ && 0 & \\ \tau &&& 0 \end{smallmatrix}\right)
\end{equation}
are both covariantly constant endomorphisms; idempotent; and have the same constant term.
\end{example}

\begin{lemma} \label{th:regular}
Let $R$ be an algebraically closed field of characteristic $0$. For any connection $\nabla$ over $R((\tau))$, the covariantly constant endomorphisms form a finite-dimensional $R$-vector space.
\end{lemma}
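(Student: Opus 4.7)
The plan is to identify the covariantly constant endomorphisms with horizontal sections of an auxiliary connection, and then to bound the dimension of horizontal sections by a Wronskian-style argument. Let $V = R((\tau))^r$ denote the rank-$r$ module underlying $\nabla$, which we write as $\nabla = \tau^2 \partial_\tau + A(\tau)$ for some matrix $A \in \mathit{Mat}_{r \times r}(R((\tau)))$. The connection $\nabla$ induces one on $\mathit{End}(V) \cong V^* \otimes V$ by the Leibniz rule, $E \mapsto \tau^2 \partial_\tau E + [A,E]$. A covariantly constant endomorphism of $V$ is exactly an element of the horizontal subspace $\mathit{End}(V)^\nabla$, and the induced module has rank $r^2$ over $R((\tau))$. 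Hence it suffices to prove that for any differential module $(W,\nabla)$ of rank $n$ over $(R((\tau)), \tau^2\partial_\tau)$, one has $\dim_R W^\nabla \leq n$.

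The key step is to show that any $R$-linearly independent horizontal sections $w_1,\dots,w_k \in W^\nabla$ remain $R((\tau))$-linearly independent. Suppose for contradiction that a nontrivial $R((\tau))$-linear relation $\sum_i f_i w_i = 0$ exists, and choose one with minimal support. After rescaling, we may assume $f_{i_0} = 1$ for some $i_0$. Applying $\nabla$ and using that the $w_i$ are horizontal yields $\sum_i (\tau^2 \partial_\tau f_i) w_i = 0$; the $i_0$-term now vanishes, so minimality of the original support forces $\tau^2 \partial_\tau f_i = 0$ for all $i$. Since the field of constants of $\tau^2 \partial_\tau$ on $R((\tau))$ is exactly $R$, each $f_i$ lies in $R$, contradicting $R$-independence of the $w_i$. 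Applied to $W = \mathit{End}(V)$, this yields $\dim_R \mathit{End}_\nabla(V) \leq r^2$.

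There is no substantive obstacle: the argument is a piece of pure differential algebra and in fact works over any differential field whose constants are $R$, without needing algebraic closure or characteristic zero. Those hypotheses appear in the statement presumably only to keep this lemma in the same setting as the other results of the appendix, where they are genuinely needed (for instance in the Levelt--Turrittin-type decomposition implicit in Lemma \ref{th:splitting-lemma} and in the non-resonance analysis of Lemma \ref{th:non-resonant}).
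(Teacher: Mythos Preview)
Your proof is correct and takes a genuinely different, more elementary route than the paper. The paper argues structurally: for connections with a regular singularity it invokes Lemma~\ref{th:non-resonant} (after a shearing to achieve non-resonance), and then reduces the general irregular case to this via the Hukuhara--Turrittin--Levelt formal decomposition. Your argument bypasses all of that structure theory: you pass to the induced connection on $\mathit{End}(V)$ and use the standard Wronskian-type fact that horizontal sections which are $R$-linearly independent remain $R((\tau))$-linearly independent, provided the field of constants is $R$. This gives the bound $\dim_R \mathit{End}_\nabla(V) \leq r^2$ directly, with no case distinction on the pole order.

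What each approach buys: yours is shorter, self-contained, and does not require $R$ to be algebraically closed (the HTL decomposition does, or at least requires passing to a ramified extension). The paper's approach, while heavier, is in line with how the lemma is actually used in Proposition~\ref{th:constructible-2}, where one already has the structural picture in mind.

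One small correction to your closing remark: the characteristic-zero hypothesis \emph{is} used in your argument, precisely at the point where you assert that the constants of $\tau^2\partial_\tau$ on $R((\tau))$ are $R$. In characteristic $p$ the constants would contain $R((\tau^p))$, and the argument would collapse. You are right, however, that algebraic closure plays no role in your proof.
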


The case of a connection with a regular singularity follows from Lemma \ref{th:non-resonant}. The general case can be reduced to this using the Hukuhura-Turrittin-Levelt decomposition (see e.g.\ \cite{babbitt-varadarajan83}).
%

\begin{proposition} \label{th:constructible-2}
Let $R$ be an algebraically closed field of characteristic $0$. Take a connection \eqref{eq:pole} and projections $E_i^0 \in \mathit{Mat}_{r \times r}(R)$,
\begin{equation}
[E_i^0,A^0] = 0, \quad 1 = E_1^0 + \cdots E_m^0, \quad E_i^0E_j^0 = \begin{cases}
E_i^0 & i = j, \\ 0 & i \neq j. \end{cases} 
\end{equation}
Suppose that, over $\tilde{R}[[\tau]]$ for some larger field $\tilde{R} \supset R$, there is a splitting of our connection which for $\tau = 0$ reduces to that given by the projections. Then, such a splitting already exists over $R$.
\end{proposition}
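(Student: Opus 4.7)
The plan is to encode splittings as points of a finite-type affine $R$-scheme and then quote the Nullstellensatz. Passing to the algebraic closure if necessary, we may assume $\tilde R$ is also algebraically closed. Let $V \subset \mathrm{Mat}_{r\times r}(R((\tau)))$ denote the space of covariantly constant endomorphisms of $\nabla$, and $V_{\tilde R}$ the analogous space over $\tilde R((\tau))$; by Lemma~\ref{th:regular} both are finite-dimensional. The Leibniz rule $\nabla(EF) = (\nabla E)F + E(\nabla F)$ shows that $V$ is closed under composition, so $V$ is an $R$-subalgebra containing the identity, and similarly for $V_{\tilde R}$. The crucial input I would need is the base-change identity $V_{\tilde R} = V \otimes_R \tilde R$. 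This follows from the Hukuhara-Turrittin-Levelt decomposition cited in the proof of Lemma~\ref{th:regular}: after a ramified cover the connection splits as a direct sum of exponential-times-regular-singular pieces, a decomposition that is intrinsic and so commutes with extension of the coefficient field, and the endomorphism algebra is then read off from the formal symbols together with the monodromies of the regular-singular factors, none of which change under base change.

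Granting that identity, I would form the affine $R$-scheme
\begin{equation*}
X = \Big\{(E_1,\dots,E_m) \in V^m \,:\, \textstyle\sum_i E_i = 1,\; E_i E_j = \delta_{ij} E_i,\; E_i|_{\tau=0} = E_i^0\Big\}.
\end{equation*}
These are polynomial conditions in the finite-dimensional affine $R$-space $V^m$ (the quadratic ones using the algebra structure of $V$, the others being linear), so $X$ is a closed subscheme of affine space over $R$, and in particular of finite type. The requirement $E_i|_{\tau=0} = E_i^0$ forces each $E_i$ to be regular at $\tau = 0$, hence $E_i \in \mathrm{Mat}_{r \times r}(R[[\tau]])$, so any $R$-point of $X$ already yields a splitting of the kind demanded by the proposition.

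To finish, observe that the hypothesized $\tilde R[[\tau]]$-splitting has components lying in $V_{\tilde R} = V \otimes_R \tilde R$ and satisfying the defining equations of $X$, so it produces a point in $X(\tilde R)$. Thus $X$ is nonempty as a finite-type affine scheme over the algebraically closed field $R$, and Hilbert's Nullstellensatz gives $X(R) \neq \emptyset$, as required. The principal obstacle in the whole argument is the base-change identity $V_{\tilde R} = V \otimes_R \tilde R$; once that is granted, the remainder is a routine application of the Nullstellensatz.
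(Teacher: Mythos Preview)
Your proof is correct and follows the same strategy as the paper's: encode splittings as points of an affine variety inside the finite-dimensional $R$-space $V$ of covariantly constant endomorphisms, then invoke the Nullstellensatz/Lefschetz principle. The paper's version is terser --- it simply says the conditions are linear and quadratic on $V$ and cites the Lefschetz principle --- whereas you spell out one step the paper leaves implicit: that the hypothesized $\tilde R[[\tau]]$-splitting actually lands in $V\otimes_R\tilde R$ rather than merely in $V_{\tilde R}$, which is what is needed for it to furnish an $\tilde R$-point of the $R$-scheme $X$. Your appeal to the Hukuhara--Turrittin--Levelt decomposition to establish $V_{\tilde R}=V\otimes_R\tilde R$ is the natural justification (and is the same machinery underlying Lemma~\ref{th:regular}).
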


\begin{proof}
We are looking for covariantly constant endomorphisms 
\begin{equation} \label{eq:projections}
E_i = E_i^0 + O(\tau), \quad I = E_1 + \cdots + E_m, \quad E_iE_j = \begin{cases} E_i & i = j, \\ 0 & i \neq j. \end{cases}
\end{equation}
Within the finite-dimensional vector space of all endomorphisms over $R((\tau))$, this is a variety defined by equations that are linear (vanishing of the coefficients with negative powers of $\tau$, and the first two parts of the equation above) and quadratic (last part). Generally, if an affine variety defined over $R$ has an $\tilde{R}$-point, it also has an $R$-point (Lefschetz principle, see e.g.\ \cite[Corollary 2.2.10]{marker02}).
\end{proof}


\begin{thebibliography}{10}

\bibitem{babbitt-varadarajan83}
D.~Babbitt and S.~Varadarajan.
\newblock Formal reduction of meromorphic differential equations: a group
  theoretic view.
\newblock {\em Pacific J. Math.}, 109:1--80, 1983.

\bibitem{brown82}
K.~Brown.
\newblock {\em Cohomology of groups}.
\newblock Springer, 1982.

\bibitem{chen24c}
Z.~Chen.
\newblock On the exponential type conjecture.
\newblock Preprint arXiv:2409.03922.

\bibitem{chen24}
Z.~Chen.
\newblock Quantum {S}teenrod operations and {F}ukaya categories.
\newblock preprint arXiv:2405.05242.

\bibitem{conrad-series}
K.~Conrad.
\newblock Infinite series in $p$-adic fields.
\newblock Unpublished note, available from the author's homepage.

\bibitem{dubrovin98}
B.~Dubrovin.
\newblock Geometry and analytic theory of {F}robenius manifolds.
\newblock In {\em Proceedings of the {I}nternational {C}ongress of
  {M}athematicians (Berlin, 1998)}, volume~2, pages 315--329.

\bibitem{dubrovin99}
B.~Dubrovin.
\newblock Painlev{\'e} transcendents in two-dimensional topological field
  theory.
\newblock In {\em The {P}ainlev\'e{} property}, CRM Ser. Math. Phys., pages
  287--412. Springer, New York, 1999.

\bibitem{dwork-sperber91}
B.~Dwork and S.~Sperber.
\newblock Logarithmic decay and overconvergence of the unit root and associated
  zeta functions.
\newblock {\em Ann. Sci. \'Ecole Norm. Sup.}, 24:575--604, 1991.

\bibitem{fukaya93b}
K.~Fukaya.
\newblock Morse homotopy and its quantization.
\newblock In {\em Geometric topology (Athens, GA, 1993)}, pages 409--440. Amer.
  Math. Soc., 1997.

\bibitem{galkin-golyshev-iritani16}
S.~Galkin, V.~Golyshev, and H.~Iritani.
\newblock Gamma classes and quantum cohomology of {F}ano manifolds: {G}amma
  conjectures.
\newblock {\em Duke Math. J.}, 165:2005--2077, 2016.

\bibitem{ganatra16}
S.~Ganatra.
\newblock Automatically generating {F}ukaya categories and computing quantum
  cohomology.
\newblock Preprint arXiv:1605.07702.

\bibitem{hugtenburg24}
K.~Hugtenburg.
\newblock The cyclic open-closed map, $u$-connections and {$R$}-matrices.
\newblock {\em Selecta Math.}, 30:paper no. 29, 2024.

\bibitem{jensen70}
C.~Jensen.
\newblock On the vanishing of $\underleftarrow{\lim}^1$.
\newblock {\em J. Algebra}, 15:151--166, 1970.

\bibitem{katzarkov-kontsevich-pantev08}
L.~Katzarkov, M.~Kontsevich, and T.~Pantev.
\newblock Hodge theoretic aspects of mirror symmetry.
\newblock In {\em From {H}odge theory to integrability and {TQFT}
  tt*-geometry}, volume~78 of {\em Proc. Sympos. Pure Math.}, pages 87--174.
  Amer. Math. Soc., 2008.

\bibitem{marker02}
D.~Marker.
\newblock {\em Model theory: an introduction}.
\newblock Springer, 2002.

\bibitem{nikolaus-scholze18}
T.~Nikolaus and P.~Scholze.
\newblock On topological cyclic homology.
\newblock {\em Acta Math.}, 221:203--409, 2018.

\bibitem{pomerleano-seidel23}
D.~Pomerleano and P.~Seidel.
\newblock The quantum connection, {F}ourier-{L}aplace transform, and families
  of {$A_\infty$}-categories.
\newblock Preprint arXiv:2308.13567.

\bibitem{seidel23}
P.~Seidel.
\newblock Formal groups and quantum cohomology.
\newblock {\em Geom. Topol.}, 27:2937--3060, 2023.

\bibitem{seidel-wilkins21}
P.~Seidel and N.~Wilkins.
\newblock Covariant constancy of quantum {S}teenrod operations.
\newblock {\em J. Fixed Point Theory Appl.}, 24:Paper No. 52, 2022.

\bibitem{viterbo95}
C.~Viterbo.
\newblock The cup-product on the {T}hom-{S}male-{W}itten complex, and {F}loer
  cohomology.
\newblock In {\em The {F}loer memorial volume}, pages 609--625. Birkh{\"a}user,
  1995.

\bibitem{viterbo97a}
C.~Viterbo.
\newblock Functors and computations in {F}loer homology with applications,
  {P}art {I}.
\newblock {\em Geom. Funct. Anal.}, 9:985--1033, 1999.

\bibitem{wasow65}
W.~Wasow.
\newblock {\em Asymptotic expansions for ordinary differential equations}.
\newblock Interscience, 1965.

\bibitem{wilkins18}
N.~Wilkins.
\newblock A construction of the quantum {S}teenrod squares and their algebraic
  relations.
\newblock {\em Geom. Top.}, 24:885--970, 2020.

\end{thebibliography}

\end{document}